\documentclass[10pt, reqno]{amsart}
\usepackage{mathrsfs}
\usepackage{epsfig}
\usepackage[dvipsnames]{xcolor}
\usepackage{graphicx}
\usepackage{amsmath}
\usepackage{amsfonts}
\usepackage{amssymb}
\usepackage{bbm}
\usepackage{amsthm}
\usepackage{amscd}
\usepackage{verbatim}
\usepackage{tikz}
\usepackage[sc]{mathpazo}
\usepackage[T1]{fontenc} 
\usepackage[autostyle]{csquotes}

\usepackage{mathtools}
\mathtoolsset{showonlyrefs=true}

\theoremstyle{plain}
\newtheorem{theorem}{Theorem}[section]
\newtheorem{proposition}[theorem]{Proposition}
\newtheorem{lemma}[theorem]{Lemma}

\theoremstyle{definition}

\newtheorem{problem}[theorem]{Problem}

\newtheorem{remark}[theorem]{Remark}
\newtheorem{example}[theorem]{Example}

\numberwithin{equation}{section}

\DeclareMathAlphabet{\mathpzc}{OT1}{pzc}{m}{it}

\newcommand{\p}{\mathpzc{p}}

\newcommand{\tr}{\operatorname{tr}}

\newcommand{\norm}[1]{\| #1 \|}


\let\oldenumerate=\enumerate
	\def\enumerate{
	\oldenumerate
	\setlength{\itemsep}{5pt}
	}
\let\olditemize=\itemize
	\def\itemize{
	\olditemize
	\setlength{\itemsep}{5pt}
	}

\usepackage{hyperref}

\begin{document}

\title[Limiting Distributions of Sums  with Random Spectral Weights]{Limiting Distributions of Sums  with Random Spectral Weights}
\author[\'A. Ch\'avez]{\'Angel Ch\'avez}
\address{Department of Mathematics and Statistics, Pomona College, 610 N. College Ave., Claremont, CA 91711} 
	\email{angel.chavez@pomona.edu}
\author[J. Waldor]{Jacob Waldor}
	\email{jcwa2017@mymail.pomona.edu}

\begin{abstract}
This paper studies the asymptotic properties of weighted sums of the form $Z_n=\sum_{i=1}^n a_i X_i$, in which $X_1, X_2, \ldots, X_n$ are i.i.d.~random variables and $a_1, a_2, \ldots, a_n$ correspond to either eigenvalues or singular values in the classic Erd\H{o}s-R\'enyi-Gilbert model. In particular, we prove central limit-type theorems for the  sequences $n^{-1}Z_n$ with varying conditions imposed on $X_1, X_2, \ldots, X_n$.
\end{abstract}

\keywords{random matrix, random graph, eigenvalue, singular value, central limit theorem, graph energy, Schatten norm, sub-gaussian, convolution, method of moments}
\subjclass[2020]{60B20, 60F05, 05C80}

\maketitle

\section{Introduction}

Suppose $X_1, X_2, \ldots$ is a sequence of random variables. A classic problem in probability is to understand the limiting behavior of the sum
\begin{align*}
Y_n=X_1+X_2+\cdots +X_n.
\end{align*} Early versions of this problem, which consider the case when $X_1, X_2, \ldots$ are independent Bernoulli trials, are rooted in the  work of  de Moivre  \cite{deMoivre}  and Laplace \cite{Laplace} pertaining to normal approximations to the binomial distribution. These classic results mark the beginnings of a long standing problem of approximating laws of sums of random variables by normal distributions. This two-hundred year problem ultimately culminated with what is today known as the central limit theorem. Major contributions towards our modern understanding of the central limit theorem are attributed, in particular,  to  L\'evy \cite{Levy}, Lindeberg \cite{Lindeberg} and Lyapunov \cite{Lyapunov}, among several others.

Suppose $X_1, X_2, \ldots$ are independent and identically distributed (i.i.d.)~random variables with mean $\mu$ and finite positive variance $\sigma^2$. The version of the central limit theorem attributed to L\'evy and Lindeberg \cite[Thm.~27.1]{Billingsley}  asserts that the sequence of random variables $(Y_n-n\mu)/(\sigma\sqrt{n})$ converge in distribution to a standard normal. A theorem attributed to Lyapunov \cite[Thm.~27.3]{Billingsley} shows the assumption that the variables $X_1, X_2, \ldots$ be identically distributed can even be relaxed as long as the absolute moments of the $X_i$ satisfy a certain (Lyapunov) growth condition.

The present focus is to study the limiting behavior of sequences of the form
\begin{align}
Z_n(\boldsymbol{a})=a_1X_1+a_2X_2+\cdots +a_nX_n,\label{WeightedSum}
\end{align} in which $X_1, X_2, \ldots $ are i.i.d.~ random variables and the weights $a_1, a_2,\ldots $ correspond to either the eigenvalues or the singular values of a random symmetric matrix. Specifically, we take eigenvalues and singular values corresponding to the Erd\H{o}s-R\'enyi-Gilbert random graph model. A random graph in this model, which was developed independently by Erd\H{o}s-R\'enyi \cite{Erdos1,Erdos2} and Gilbert \cite{Gilbert},  is constructed by attaching edges among a set of labeled vertices independently with probability $p$. The random variables $a_iX_i$ in this case are neither independent nor indentically distributed, and there is no general method available to handle this situation. However, adjacency matrices of Erd\H{o}s-R\'enyi-Gilbert graphs have bounded entries which, modulo the constraints imposed by symmetry, are independent. This simple fact, together, with the almost sure convergence of their empirical spectral distributions to the semicircular law,  allow us to establish central limit-type theorems for the sequences $n^{-1}Z_n(\boldsymbol{a})$.

\subsection{Notation and Terminology}

 Graph theoretic terminology may be found in \cite{Bollobas}. A \emph{graph $G$ of order $n$}  is an ordered pair $(V, E)$ consisting of a set $E=E(G)$ of \emph{edges} and a set $V=V(G)$ of \emph{vertices} such that $|V|=n$. We adopt standard notation and let $m=|E(G)|$ denote the number of edges in a graph $G$. A graph is \emph{simple} if it contains no loops or multiple edges and it is \emph{connected} if it contains no isolated vertices. If $G$ is a graph of order $n$, then its \emph{adjacency matrix} is the $n\times n$ real symmetric matrix  $A(G)$ whose entries are defined by setting $[A(G)]_{ij}=1$ if vertices $i$ and $j$ are connected by an edge and $[A(G)]_{ij}= 0$ otherwise.  The \emph{spectrum} of $G$ is the spectrum of its adjacency matrix $A(G)$, and is therefore  real since $A(G)$ is Hermitian. We adopt a standard convention and write the spectrum of $G$ in non-increasing order,
\begin{align*}
\lambda_1\geq \lambda_2\geq \cdots \geq \lambda_n.
\end{align*} Moreover, the \emph{singular spectrum} of $G$ consists of the singular values of $A(G)$. We remark that the singular values of the Hermitian matrix $A(G)$ correspond to the moduli of its eigenvalues. Again, we adopt a standard convention and write the singular spectrum of $G$ in non-increasing order,
\begin{align*}
s_1\geq s_2\geq \cdots \geq s_n.
\end{align*}

For $q\geq 1$, we let $L^q(\Omega, \mathcal{F}, \mathbb{P}_{\Omega})$ denote the vector space of random variables defined on the probability space $(\Omega, \mathcal{F}, \mathbb{P}_{\Omega})$ with finite $L^q$-norm defined by
\begin{align*}
\norm{X}_q=\Big(\mathbb{E}|X|^q\Big)^{1/q}.
\end{align*} A random variable $X$ defined on a probability space $(\Omega, \mathcal{F}, \mathbb{P}_{\Omega})$ is called \emph{sub-gaussian} if $\norm{X}_q\leq \norm{X}_{\psi_2}\sqrt{q}$ for all $q\geq 1$, where
\begin{align*}
\norm{X}_{\psi_2}=\sup_{q\geq 1} \Big\{ q^{-1/2}\norm{X}_q\Big\}
\end{align*} is called the \emph{sub-gaussian norm of} $X$ \cite[Def.~2.5.6]{Vershynin}.  Gaussian, Bernoulli and bounded random variables are typical examples of sub-gaussian random variables \cite[Ex.~2.5.8]{Vershynin}.

\subsection{Statement of Results}

This paper establishes central limit-type theorems for the sequences of weighted sums
\begin{align}
W_n(\boldsymbol{a})=\frac{1}{n}\sum_{j=1}^n a_jX_j,\label{EigenSequence}
\end{align} in which $X_1, X_2, \ldots, X_n$ are i.i.d.~random variables and $a_1, a_2, \ldots, a_n$ correspond to either the eigenvalues or the singular values of Erd\H{o}s-R\'enyi-Gilbert graphs. Graphs in the \emph{Erd\H{o}s-R\'enyi-Gilbert $\mathcal{G}(n,p)$ model} are constructed by attaching edges between each vertex pair from a set of $n$ labeled vertices independently with probability $p\in [0,1]$. Here and henceforth we assume $p\neq 0,1$.

 The first two theorems illustrate the relatively simple  limiting distributions of $W_n(\boldsymbol{\lambda})$ in the case when certain symmetry conditions are imposed on $X_1, X_2, \ldots$. The third theorem illustrates the simple limiting distributions of $W_n(\boldsymbol{s})$ in the case when $X_1,X_2,\ldots$ are sub-gaussian with mean zero but not necessarily symmetric.

\begin{theorem}\label{MainTheorem1}
Suppose $X$ is a normal random variable with mean $\mu$ and variance $\sigma^2$.  If $X_1, X_2, \ldots, X_n\sim X$ are i.i.d.~ random variables, then
$W_n(\boldsymbol{\lambda})/(\sigma\sqrt{p})$ converges in distribution to a standard normal.
\end{theorem}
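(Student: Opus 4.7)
The plan is to condition on the random graph $G$ and exploit the fact that linear combinations of independent Gaussians are Gaussian. Conditional on $G$ (equivalently, conditional on $\boldsymbol{\lambda}$), the sum $\sum_{j=1}^n \lambda_j X_j$ is a linear combination of i.i.d.~$\mathcal{N}(\mu,\sigma^2)$ random variables, hence normally distributed with mean $\mu \sum_j \lambda_j$ and variance $\sigma^2 \sum_j \lambda_j^2$. Because the adjacency matrix $A(G)$ has zero diagonal, $\sum_j \lambda_j = \tr A(G) = 0$, so the mean contribution vanishes regardless of $\mu$. Moreover $\sum_j \lambda_j^2 = \tr A(G)^2 = 2m$, where $m$ is the edge count. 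Thus conditionally on $G$,
\begin{equation*}
\frac{W_n(\boldsymbol{\lambda})}{\sigma\sqrt{p}} \sim \mathcal{N}\!\left(0,\, \frac{2m}{n^2 p}\right).
\end{equation*}

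Next I would use the law of total expectation to compute the characteristic function unconditionally:
\begin{equation*}
\mathbb{E}\!\left[e^{it W_n(\boldsymbol{\lambda})/(\sigma\sqrt{p})}\right] = \mathbb{E}\!\left[\exp\!\left(-\tfrac{t^2 m}{n^2 p}\right)\right].
\end{equation*}
The remaining task is to show this converges to $e^{-t^2/2}$, after which L\'evy's continuity theorem yields the claim. Since $m \sim \mathrm{Binomial}(\binom{n}{2}, p)$, we have $\mathbb{E}[2m/n^2] = (n-1)p/n \to p$ and $\mathrm{Var}(2m/n^2) = 4\binom{n}{2}p(1-p)/n^4 \to 0$, so $2m/(n^2 p) \to 1$ in probability. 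Consequently $t^2 m/(n^2 p) \to t^2/2$ in probability, and since the integrand $\exp(-t^2 m/(n^2 p))$ is bounded by $1$, the bounded convergence theorem delivers the required limit.

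I do not anticipate a genuine obstacle here, since everything reduces to two elementary facts: the trace identities for $A(G)$ and the concentration of the binomial edge count. The only point requiring slight care is ensuring that the mean $\mu$ truly drops out (which it does, thanks to $\tr A(G) = 0$) and that the convergence $2m/n^2 \to p$ in probability is strong enough to pass through the bounded continuous map $x \mapsto e^{-t^2 x/(2p)}$; bounded convergence handles this cleanly.
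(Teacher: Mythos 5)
Your proof is correct, but it is a genuinely different argument from the one in the paper. You exploit the stability of the Gaussian family: conditionally on the graph, $\sum_j \lambda_j X_j$ is exactly normal, with mean killed by $\p_1(\boldsymbol{\lambda})=\tr A(G)=0$ and variance determined by $\p_2(\boldsymbol{\lambda})=\tr A(G)^2=2m$, so the whole problem collapses to binomial concentration of the edge count plus L\'evy continuity; the only facts about the spectrum you need are the two trace identities. (One small point of rigor: bounded convergence as usually stated requires almost sure convergence, while you have $2m/n^2\to p$ only in probability; this is repaired by the standard subsequence trick—every subsequence has a further almost surely convergent subsequence—or by noting that convergence in probability of uniformly bounded variables implies $L^1$ convergence. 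Not a gap, just worth saying.) The paper instead proves Theorem \ref{MainTheorem1} by the method of moments: it establishes uniform bounds $\mathbb{E}_{\Omega}|W_n(\boldsymbol{\lambda})|^j\leq c$ via Hoeffding-type inequalities for sub-gaussian sums, computes the limits of all moments $\mathbb{E}[W_n(\boldsymbol{\lambda})]^j$ through Proposition \ref{Proposition:AsymptoticEigen}—which in turn rests on the F\"uredi--Koml\'os bounds on $\lambda_1,\lambda_2$ and Arnold's almost sure semicircle law to control the higher power sums $\p_k(\boldsymbol{\lambda})$—and then identifies the limiting moment generating function $\exp(p\sigma^2t^2/2)$ via Bell polynomial identities. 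Your route is considerably more elementary and avoids the random-matrix machinery and moment-determinacy issues entirely; what the paper's heavier framework buys is generality: the conditional-Gaussianity step is the one place your argument uses normality of $X$ essentially, so it cannot be adapted to the merely sub-gaussian settings of Theorems \ref{MainTheorem2} and \ref{MainTheorem3}, for which the power-sum asymptotics of all orders (not just $\p_1$ and $\p_2$) genuinely enter, and Theorem \ref{MainTheorem1} then falls out as the specialization of identity \eqref{MomentMethod3} to the Gaussian moment generating function.
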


\begin{theorem}\label{MainTheorem2}
Suppose $X$ is a symmetric sub-gaussian random variable with variance $\sigma^2$. If $X_1, X_2, \ldots, X_n\sim X$ are i.i.d.~random variables, then
\begin{align*}
W_n(\boldsymbol{\lambda})\to pX+N_p
\end{align*} in distribution, where $N_p$ is a normal random variable, independent of $pX$, with mean zero and variance $p(1-p)\sigma^2$.
\end{theorem}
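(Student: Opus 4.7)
The plan is to pass to characteristic functions and apply L\'evy's continuity theorem. Since the $X_j$ are i.i.d.\ and independent of the graph $G\sim \mathcal{G}(n,p)$, I may assume $X_j$ is paired with the $j$-th largest eigenvalue $\lambda_j$ of $A(G)$. Conditioning on $G$ and using independence of the $X_j$ gives
\begin{align*}
\mathbb{E}\bigl[e^{itW_n(\boldsymbol{\lambda})}\bigm| G\bigr] = \prod_{j=1}^n \phi_X(t\lambda_j/n),
\end{align*}
where $\phi_X$ denotes the characteristic function of $X$. I would then isolate the top-eigenvalue factor $\phi_X(t\lambda_1/n)$, handle it by continuity, and Taylor-expand the bulk product $\prod_{j\geq 2}\phi_X(t\lambda_j/n)$.

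The two spectral inputs I would use are: (i) since $\mathbb{E}[A(G)] = p(J-I)$ has spectrum $\{p(n-1),-p,\ldots,-p\}$ and $\|A(G)-\mathbb{E}A(G)\|_{\mathrm{op}}=O(\sqrt{n})$ almost surely (F\"uredi--Koml\'os), Weyl's inequality yields $\lambda_1/n\to p$ and $\max_{j\geq 2}|\lambda_j|=O(\sqrt{n})$ almost surely; (ii) $\sum_{j=1}^n\lambda_j^2=\tr(A(G)^2)=2m$, so the strong law applied to the Bernoulli edge indicators gives $n^{-2}\sum_{j=1}^n\lambda_j^2\to p$, whence subtracting $\lambda_1^2/n^2\to p^2$ produces $n^{-2}\sum_{j\geq 2}\lambda_j^2\to p(1-p)$ almost surely. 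From (i), $\phi_X(t\lambda_1/n)\to \phi_X(tp)$ almost surely. For $j\geq 2$ one has $|t\lambda_j/n|=O(t/\sqrt{n})\to 0$, and symmetry plus sub-gaussianity imply $\log\phi_X(u) = -\tfrac{1}{2}\sigma^2 u^2 + O(u^4)$ near $u=0$ (odd moments vanish, and the constant in the remainder is governed by $\|X\|_{\psi_2}$); summing gives
\begin{align*}
\sum_{j\geq 2}\log\phi_X(t\lambda_j/n) = -\frac{\sigma^2 t^2}{2n^2}\sum_{j\geq 2}\lambda_j^2 + O\!\left(\frac{t^4}{n^4}\sum_{j\geq 2}\lambda_j^4\right).
\end{align*}
Bounding the remainder by $\max_{j\geq 2}\lambda_j^2\cdot \sum_j\lambda_j^2 = O(n)\cdot O(n^2) = O(n^3)$ using (i) and (ii) makes it vanish, leaving the limit $-\tfrac{1}{2}\sigma^2 p(1-p)t^2$. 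Since $|\phi_X|\leq 1$, bounded convergence removes the conditioning, and the resulting limit $\phi_X(tp)\exp(-\tfrac{1}{2}\sigma^2 p(1-p)t^2)$ is exactly the characteristic function of $pX + N_p$ with $N_p\sim\mathcal{N}(0,p(1-p)\sigma^2)$ independent of $pX$.

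The principal obstacle is the quartic remainder estimate. It is essential that symmetry kills the cubic term (so the remainder is $O(u^4)$ rather than $O(u^3)$) and that sub-gaussianity bounds its constant uniformly; without both, the sum $n^{-4}\sum_{j\geq 2}\lambda_j^4$ could not be absorbed. Equally crucial is the spectral estimate $\max_{j\geq 2}|\lambda_j|=O(\sqrt{n})$: the naive bound $|\lambda_j|\leq \lambda_1\asymp n$ would leave the quartic contribution of constant order and the argument would collapse, so the Wigner-type operator-norm control on the centered adjacency matrix is the substantive input driving the theorem.
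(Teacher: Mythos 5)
Your proposal is correct, but it takes a genuinely different route from the paper. The paper proves Theorem \ref{MainTheorem2} by the method of moments: it expands the conditional moments $f_j(\boldsymbol{\lambda})$ in cumulants and power sums (Lemma \ref{Lemma:Tracial}), obtains the power-sum asymptotics from Arnold's almost-sure semicircle convergence together with the F\"uredi--Koml\'os eigenvalue bounds (Lemma \ref{OddTrace} and Proposition \ref{Proposition:AsymptoticEigen}), uses Hoeffding's inequality to get the uniform bound \eqref{Uniform} so that Fubini--Tonelli and a splitting over a high-probability event yield the limiting moments \eqref{MomentMethod1}, and finally resums those moments through Bell-polynomial generating functions to recognize the moment generating function $\exp\bigl(\tfrac{1}{2}p(1-p)\sigma^2t^2\bigr)M(pt)$, with sub-gaussianity guaranteeing moment determinacy. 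You instead condition on the graph, pass to characteristic functions, isolate the Perron factor $\phi_X(t\lambda_1/n)\to\phi_X(tp)$, and run a Lindeberg-type expansion on the bulk, with the only spectral inputs being $\max_{j\geq 2}|\lambda_j|=O(\sqrt{n})$ (Weyl plus the operator-norm form of F\"uredi--Koml\'os) and the trace identity $\sum_j\lambda_j^2=2m$. This buys real simplifications: since $|\phi_X|\leq 1$, deconditioning is free by bounded convergence, so you need neither the Hoeffding uniform-integrability step nor any moment-determinacy discussion, and you bypass the semicircle law entirely. Moreover, your closing commentary undersells your own argument: symmetry is not essential, because even a nonvanishing cubic term is harmless, as $n^{-3}\sum_{j\geq 2}|\lambda_j|^3 \leq n^{-3}\cdot O(\sqrt{n})\cdot O(n^2)=O(n^{-1/2})$; mean zero with finite third (indeed, by the classical triangular-array CLT, just finite second) moment suffices, so your route also relaxes the sub-gaussian hypothesis and bears directly on the paper's second open problem. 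Two small precision points: F\"uredi--Koml\'os is a with-high-probability statement rather than an almost sure one, and the graphs for different $n$ are not coupled on a common probability space, so the strong law for the edge count is likewise misquoted; but convergence in probability of the conditional characteristic function, combined with $|\phi_X|\leq 1$, still gives $\mathbb{E}\bigl[e^{itW_n(\boldsymbol{\lambda})}\bigr]\to \phi_X(tp)\exp\bigl(-\tfrac{1}{2}p(1-p)\sigma^2t^2\bigr)$, so both fixes are cosmetic.
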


\begin{theorem}\label{MainTheorem3}
Suppose $X$ is a sub-gaussian random variable with mean zero and variance $\sigma^2$. If $X_1, X_2, \ldots, X_n\sim X$ are i.i.d.~random variables, then
\begin{align*}
 W_n(\boldsymbol{s})\to pX+N_p
\end{align*} in distribution, where $N_p$ is a normal random variable, independent of $pX$, with mean zero and variance $p(1-p)\sigma^2$.
\end{theorem}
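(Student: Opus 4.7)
The plan is to compute the characteristic function of $W_n(\boldsymbol{s})$ by conditioning on the random graph $G$. Since the $X_j$ are i.i.d.\ and independent of $G$, writing $\phi_X$ for the characteristic function of $X$ we have
\begin{align*}
\mathbb{E}\big[e^{it W_n(\boldsymbol{s})}\big] = \mathbb{E}_G\!\left[\prod_{j=1}^n \phi_X\!\left(\tfrac{t s_j}{n}\right)\right].
\end{align*}
The strategy is to split off the principal singular value $s_1$ (of order $n$) from the bulk $s_2,\ldots,s_n$ (of order $\sqrt{n}$), show that the random product converges almost surely to $\phi_X(tp)\cdot e^{-\sigma^2 t^2 p(1-p)/2}$, and invoke bounded convergence (each factor has modulus at most $1$). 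That limit is the characteristic function of $pX + N_p$ with $N_p\sim N(0, p(1-p)\sigma^2)$ independent of $X$, which is the claim.

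The argument rests on three spectral inputs: (i) $s_1/n \to p$ almost surely; (ii) the identity $\sum_j s_j^2 = \tr(A(G)^2) = 2m$ combined with the strong law for the Binomial edge count $m$ gives $\tfrac{1}{n^2}\sum_j s_j^2 \to p$ almost surely, and hence $\tfrac{1}{n^2}\sum_{j\geq 2} s_j^2 \to p(1-p)$; (iii) a subprincipal bound $s_2 = O(\sqrt{n})$ almost surely. Inputs (i) and (iii) come from the decomposition $A(G) = p(J-I) + W$, where $J$ is the all-ones matrix, $I$ the identity, and $W$ is a centered symmetric matrix with independent above-diagonal bounded entries of variance $p(1-p)$: a Bai--Yin-type estimate yields $\|W\| = O(\sqrt{n})$ a.s., and Weyl's inequality then separates $s_1\approx pn$ from the bulk.

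For $j \geq 2$ the argument $ts_j/n$ is of order $n^{-1/2}$, so the Taylor expansion $\phi_X(u) = 1 - \tfrac{1}{2}\sigma^2 u^2 + \rho(u)$ with $|\rho(u)| \leq C\,\mathbb{E}|X|^3\,|u|^3$ applies (the third moment is finite since $X$ is sub-gaussian). Taking logarithms and using $\log(1+z) = z + O(z^2)$ for small $z$ gives
\begin{align*}
\sum_{j=2}^n \log \phi_X\!\left(\tfrac{t s_j}{n}\right) = -\frac{\sigma^2 t^2}{2n^2}\sum_{j=2}^n s_j^2 + E_n,
\end{align*}
with $|E_n| \lesssim n^{-3}\sum_{j\geq 2} s_j^3 \leq n^{-3}\, s_2 \sum_{j\geq 2} s_j^2 = O(n^{-1/2})$ almost surely by (ii) and (iii). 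The main term tends to $-\sigma^2 t^2 p(1-p)/2$ by (ii), contributing the $N_p$ factor; continuity of $\phi_X$ together with (i) then yields $\phi_X(ts_1/n) \to \phi_X(tp)$, the characteristic function of $pX$.

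The principal obstacle is the subprincipal spectral bound (iii): if a single $s_j$ with $j\geq 2$ failed to be $o(n)$, the quadratic approximation would break and the limit could carry a non-Gaussian contribution from that coordinate. I would either appeal to the Bai--Yin law applied to the centered Wigner-type matrix $W$, or reprove it by a short method-of-moments argument bounding $\mathbb{E}\tr(W^{2k})$ by a sum over non-crossing pairings. A secondary remark is that, in contrast to Theorem~\ref{MainTheorem2}, no symmetry hypothesis on $X$ is required here precisely because $s_j\geq 0$; the mean-zero hypothesis is exactly what makes the quadratic term of $\phi_X$ the leading nontrivial contribution and produces a Gaussian limit for the bulk.
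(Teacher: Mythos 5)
Your proof is correct, but it takes a genuinely different route from the paper. The paper proves Theorem \ref{MainTheorem3} by the method of moments: it expands the conditional moments $f_j(\boldsymbol{s})$ in cumulants and power sums via Lemma \ref{Lemma:Tracial}, obtains their asymptotics in Proposition \ref{Proposition:AsymptoticSingular}, uses the sub-gaussian Hoeffding bound \eqref{Vershyninq} to get uniform boundedness of $\mathbb{E}_{\Omega}|W_n(\boldsymbol{s})|^j$ over graphs (justifying Fubini and the passage to the limit of total expectations), and then identifies the limiting moment generating function $\exp\big(\tfrac{1}{2}p(1-p)\sigma^2t^2\big)M(pt)$, invoking moment determinacy. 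You instead condition on the graph, factor the characteristic function, split $s_1$ from the bulk, and run a Lindeberg-type Taylor expansion on $\log\phi_X(ts_j/n)$ using $s_1/n\to p$, $n^{-2}\sum_j s_j^2=2m/n^2\to p$, and $s_2=O(\sqrt{n})$, closing with bounded convergence and L\'evy continuity; your error estimate $|E_n|\lesssim n^{-3}s_2\sum_{j\geq 2}s_j^2=O(n^{-1/2})$ is sound, as is the identification of the limit $\phi_X(tp)\,e^{-\sigma^2t^2p(1-p)/2}$. Two remarks on the trade-offs. First, your almost-sure formulations of (i)--(iii) implicitly presuppose a coupling of the graphs across $n$ (the same issue underlies the paper's use of Arnold's theorem); but since each factor of your conditional characteristic function has modulus at most $1$, convergence in probability suffices by dominated convergence, so the w.h.p.~estimates \eqref{Erdos1} and \eqref{Erdos2} already cited in the paper can replace the Bai--Yin input entirely, making your obstacle (iii) lighter than you fear. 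Second, your route is notably more economical in hypotheses: you use sub-gaussianity only through finiteness of the third moment, and you never need moment determinacy or a moment generating function, so your argument in fact relaxes the sub-gaussian assumption --- which is precisely one of the open problems posed in Section \ref{Section8}. What the paper's heavier machinery buys is reusability: the same Bell-polynomial expansion and uniform-boundedness scheme powers all four main theorems, including Theorem \ref{MainTheorem4} where the limit is degenerate and the $\sqrt{n}$ scaling changes, whereas your expansion is tailored to the mean-zero singular-value case.
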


The final theorem illustrates, in particular, how sensitive the sequences $W_n(\boldsymbol{s})$ are with respect to the conditions imposed on $X_1, X_2, \ldots, X_n$. Very distinct behavior emerges by simply choosing random variables with non-zero mean. Ultimately, this distinction is rooted in the asymptotic behavior of the \emph{Schatten $q$ norm} of a graph, which is defined for $q\geq 1$ by
\begin{align*}
\norm{G}_{S_q}=\Big( s_1^q+s_2^q+\cdots + s_n^q\Big)^{1/q}.
\end{align*} In particular, the large $n$ behavior of $\norm{G}_{S_q}$ is very different for the cases $q=1$ and $q>1$ \cite[Thm.~5]{Nikiforov}. Note the sub-gaussian condition is also removed in the following theorem.

\begin{theorem}\label{MainTheorem4}
Suppose $X$ is any random variable with non-zero mean $\mu$ which admits a moment generating function. If $X_1, X_2, \ldots, X_n\sim X$ are i.i.d.~random variables, then $W_n(\boldsymbol{s})/(\mu\sqrt{n})$ converges in distribution to a point mass at $\frac{8}{3\pi}\sqrt{p(1-p)}$.
\end{theorem}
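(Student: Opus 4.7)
The strategy is to separate the deterministic-in-$X$ contribution (which produces the limit) from a mean-zero noise (which vanishes). Setting $Y_j = X_j - \mu$, so that the $Y_j$ are i.i.d.~with $\mathbb{E}[Y_j] = 0$ and $\sigma^2 := \operatorname{Var}(Y_j) < \infty$ (finite since $X$ admits an MGF), the target quantity splits as
\begin{align*}
\frac{W_n(\boldsymbol{s})}{\mu\sqrt{n}}
\;=\; \frac{1}{n^{3/2}}\sum_{j=1}^n s_j \;+\; \frac{1}{\mu n^{3/2}}\sum_{j=1}^n s_j Y_j.
\end{align*}
By Slutsky's theorem it suffices to show (i) the first summand converges in probability to $\tfrac{8}{3\pi}\sqrt{p(1-p)}$, and (ii) the second summand converges to zero in probability.

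For (i), note that $\sum_j s_j = \|G\|_{S_1}$ is the graph energy. The Wigner semicircle law applies to the centered, rescaled matrix $(A(G) - \mathbb{E}[A(G)])/\sqrt{np(1-p)}$, whose empirical spectral distribution converges almost surely to the semicircle law on $[-2,2]$. Since $\mathbb{E}[A(G)] = p(J-I)$ is a rank-two perturbation, by Cauchy interlacing it shifts the empirical distribution by at most $O(1/n)$ and introduces at most one outlier eigenvalue of order $\sqrt{n}$; this outlier contributes only $O(n^{-1/2})$ to $n^{-1}\sum_j |\lambda_j|/\sqrt{np(1-p)}$, while the remaining bulk is uniformly bounded thanks to a standard F\"uredi--Koml\'os type operator-norm estimate. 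Weak convergence then upgrades to convergence of the first absolute moment:
\begin{align*}
\frac{1}{n^{3/2}\sqrt{p(1-p)}}\sum_{j=1}^n s_j \;\longrightarrow\; \int_{-2}^{2} |x|\,\frac{\sqrt{4-x^2}}{2\pi}\,dx \;=\; \frac{8}{3\pi}.
\end{align*}

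For (ii), I would condition on the graph $G$. Conditionally, the $Y_j$'s remain i.i.d.~with mean zero and variance $\sigma^2$, and Chebyshev's inequality gives
\begin{align*}
\mathbb{P}\!\left(\Big|\tfrac{1}{\mu n^{3/2}}\sum_{j=1}^n s_j Y_j\Big| > \varepsilon \;\Big|\; G\right)
\;\leq\; \frac{\sigma^2 \sum_j s_j^2}{\mu^2 \varepsilon^2 n^3}
\;=\; \frac{2\sigma^2 m}{\mu^2 \varepsilon^2 n^3},
\end{align*}
using the identity $\sum_j s_j^2 = \operatorname{tr}(A(G)^2) = 2m$. Taking expectation over $G$ and bounding $\mathbb{E}[m] \leq pn^2/2$ yields an unconditional probability bound of order $1/n$, which vanishes.

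The main obstacle is step (i): weak convergence of the empirical spectral distribution does not by itself give convergence of $\int|x|\,d\mu_n$, because $|x|$ is unbounded on $\mathbb{R}$. One must separately handle the outlier eigenvalue induced by the nonzero mean of $A(G)$ and confirm concentration of the edge of the bulk. This is essentially the content of Nikiforov's cited estimate on $\|G\|_{S_q}$ specialized to $q=1$, so the theorem should follow cleanly from that result combined with the Slutsky decomposition above. Incidentally, the MGF hypothesis is stronger than strictly needed for this approach; a finite second moment of $X$ would already suffice.
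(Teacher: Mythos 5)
Your proposal is correct, and it takes a genuinely different route from the paper. The paper stays within its uniform method-of-moments framework: Lemma \ref{Lemma:Tracial} (the cumulant/Bell-polynomial expansion, which is where the MGF hypothesis is actually consumed) expresses the conditional moments $f_j(\boldsymbol{s})$ in terms of power sums; Proposition \ref{Proposition:AsymptoticEnergy} combines the graph-energy asymptotic $\mathpzc{p}_1(\boldsymbol{s})/n^{3/2}=\tfrac{8\sigma_p}{3\pi}+o(1)$ w.h.p.\ with $\mathpzc{p}_i=(p^i+o(1))n^i$ for $i\geq 2$ to show $f_j(\boldsymbol{s})/(\mu n^{3/2})^j\to(\tfrac{8\sigma_p}{3\pi})^j$; and then Minkowski plus Cauchy--Schwarz give a uniform bound on the conditional moments, Fubini--Tonelli and an event-splitting argument convert the w.h.p.\ statement into convergence of total moments, and the method of moments identifies the point mass. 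Your bias--fluctuation decomposition $W_n(\boldsymbol{s})/(\mu\sqrt{n})=n^{-3/2}\sum_j s_j+(\mu n^{3/2})^{-1}\sum_j s_j Y_j$ replaces all of that with the same single external input---the graph-energy law of large numbers, for which the paper's actual citation is Du--Li--Li \cite[Thm.~1]{Du} in Proposition \ref{Proposition:AsymptoticEnergy}, rather than Nikiforov's Schatten-norm theorem, though the two are closely related---together with a conditional Chebyshev bound using $\sum_j s_j^2=2m$, which is shorter and more elementary; and, as you note, your route needs only a finite second moment, exposing the MGF hypothesis as an artifact of the paper's cumulant machinery. Two small inaccuracies in your step (i), neither fatal: $\mathbb{E}[A(G)]=pJ-pI$ is a rank-one perturbation plus a scalar shift (not rank two), and the outlier $\lambda_1$ has order $n$, not $\sqrt{n}$---your estimate that it contributes $O(n^{-1/2})$ to the normalized energy is in fact the computation for an order-$n$ outlier. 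Finally, since convergence in distribution to a constant is equivalent to convergence in probability, your Slutsky step reduces to plain algebra of in-probability limits, so your conclusion matches the theorem exactly.
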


There exist various central limit-type theorems in the literature pertaining to sums of eigenvalues of random matrices e.g.~\cite{Cipolloni, Johansson, Lytova, Shcherbina}. Among the earliest results in this direction are due to Johansson \cite{Johansson}. These specific results concern random Hermitian matrices distributed according to the probability measure
\begin{align}
d\mu_{n,\tau}(A) \propto\exp\big( -\tau\tr V(A)\big)\,dA,\label{HermitianMeasure}
\end{align} in which $\tau>0$, $V(x)$ is a polynomial with positive even degree and a positive leading coefficient and $dA$ denotes Lebesgue measure on the space of $n\times n$ complex Hermitian matrices. It bears worth mentioning that setting $\tau=n/2$ and $V(x)=x^2$ gives rise to the Gaussian unitary ensemble introduced by Dyson \cite{Dyson1, Dyson2}. The main result of \cite{Johansson} establishes that the linear eigenvalue statistic $\sum_{k} f(\lambda_k)$ converges in distribution to a normal random variable with mean zero. The sums we consider in this paper can be thought of as randomized graph-theoretic versions of the sums originally considered by Johansson.

\subsection{Examples and Simulations}

The following examples and simulations illustrate a few of the main theorems. All code used to generate these plots is made available by contacting the authors.

\begin{example}
Suppose $X$ is a normal random variable with mean $\mu$ and variance $\sigma^2$. Theorem \ref{MainTheorem1} ensures that $W_n(\boldsymbol{\lambda})/(\sigma\sqrt{p})$ converges in distribution to a standard normal. The simulation in Figure \ref{fig:Example1} is performed with $\mu=\sigma^2=1$.
\end{example}

\begin{figure}[h!]
  \includegraphics[scale=0.45]{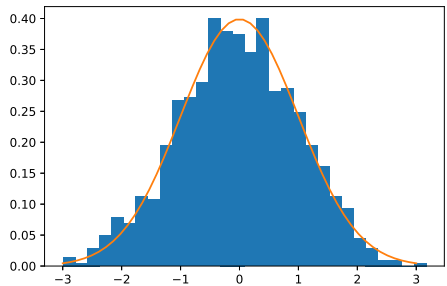}
  \caption{Histogram plot for $W_n(\boldsymbol{\lambda})/(\sigma\sqrt{p})$ using 750 trials taken with $n=1000$ and $p=1/2$.}
  \label{fig:Example1}
\end{figure}

\begin{example}
Suppose $X$ is a Rademacher random variable. In particular, one has $\mathbb{P}(X=1)=\mathbb{P}(X=-1)=1/2$. The random variable $X$ is sub-gaussian and Theorem \ref{MainTheorem2} ensures that $W_n(\boldsymbol{\lambda})$ converges in distribution to the sum of $pX$ with an independent normal random variable with mean zero and variance $\sigma_p^2=p(1-p)$. The probability density of this sum is given by the convolution of the gaussian $f(x)=(\sigma_p\sqrt{2\pi})^{-1}\exp\big[ -x^2/(2\sigma_p^2)\big]$ with $\big[  \delta(x+p) + \delta(x-p)\big]/2 $. Interestingly, this density corresponds to the gaussian mixture $\big[f(x+p)+f(x-p)\big]/2$, which is bimodal in the case $p>1/2$. Figure \ref{fig:Example2} shows histogram plots for $W_n(\boldsymbol{\lambda})$ in the cases $p=1/2$ and $p=3/4$.
\end{example}

\begin{figure}[h!]
  \includegraphics[scale=0.4]{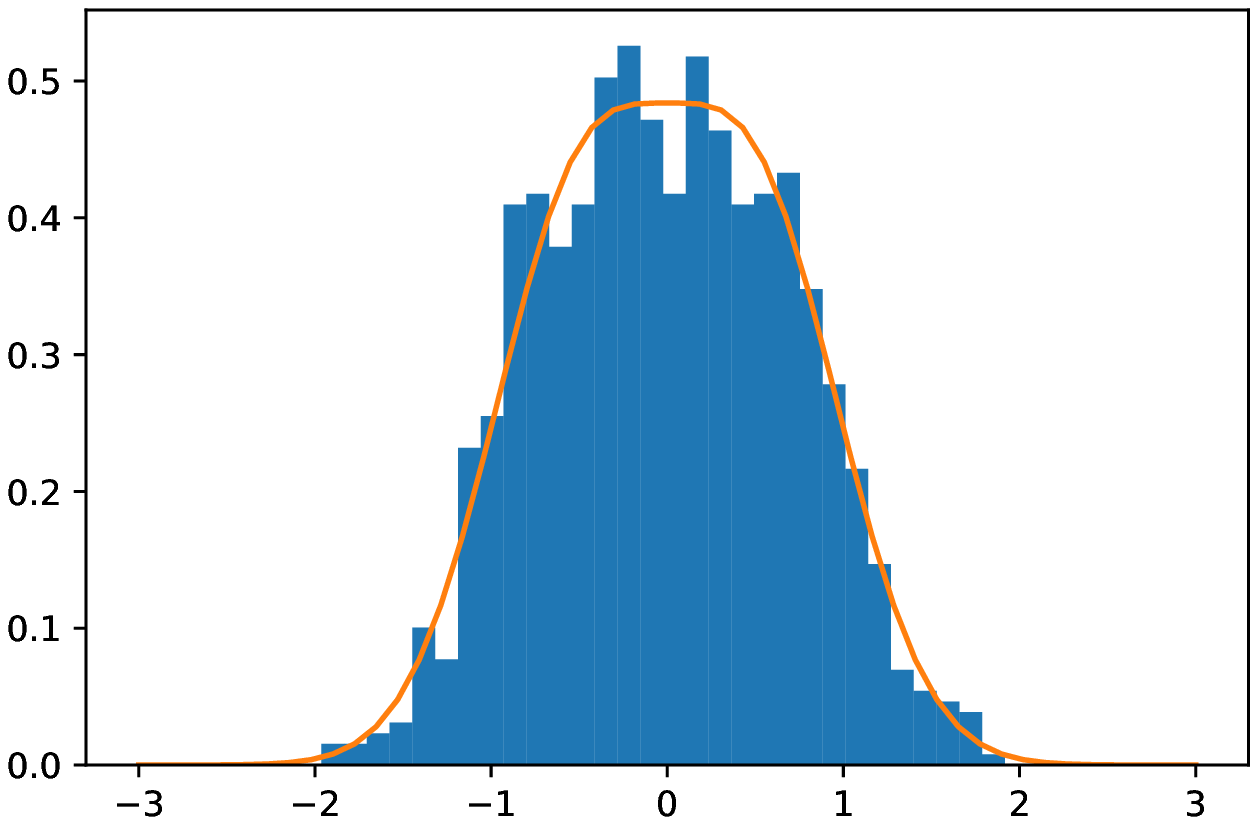}  \includegraphics[scale=0.4]{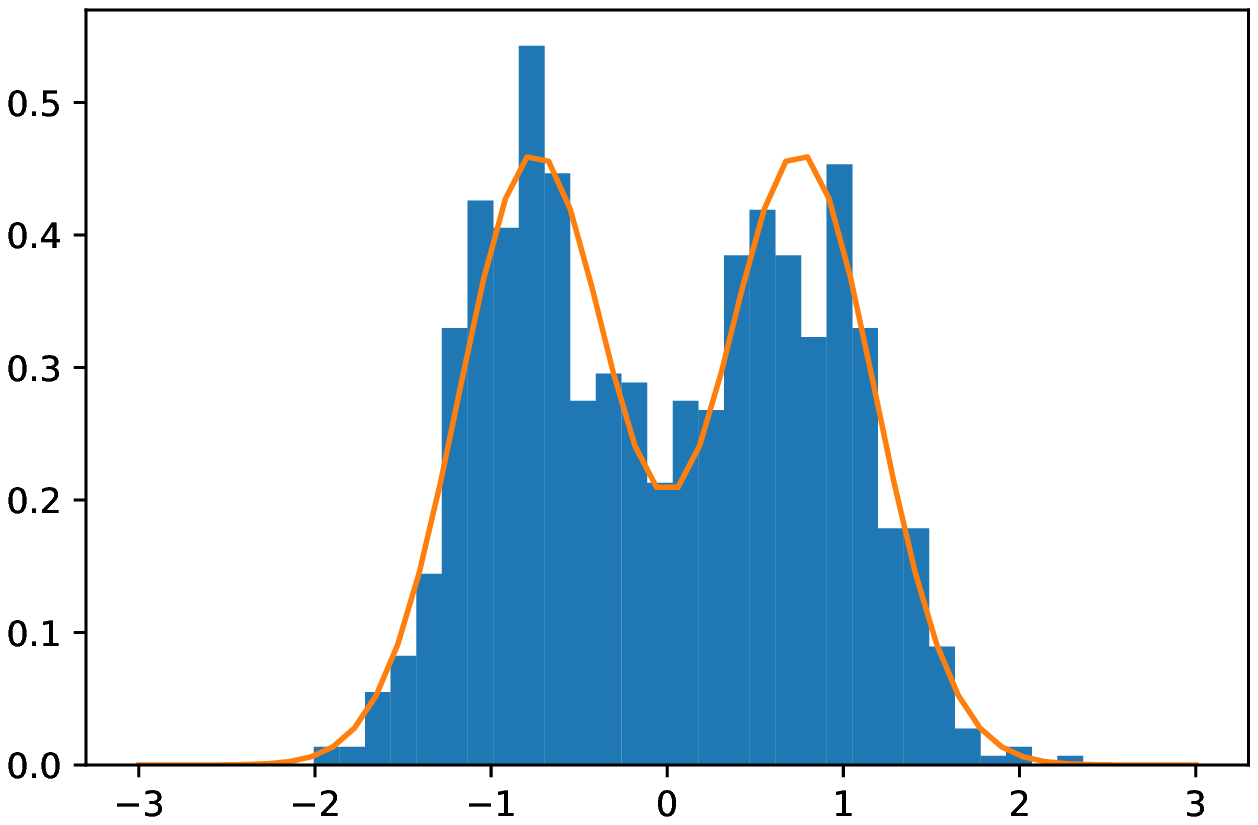}
  \caption{Histogram plots for $W_n(\boldsymbol{\lambda})$ using 750 trials taken with $n=1000$ and $p=1/2$ (left) and $p=3/4$ (right). }
  \label{fig:Example2}
\end{figure}

\begin{example}
Suppose $X$ is a normal random variable with non-zero mean $\mu$ and variance $\sigma^2$. Theorem \ref{MainTheorem4} ensures that $W_n(\boldsymbol{s})/(\mu\sqrt{n})$ converges in distribution to a point mass at $\frac{8}{3\pi} \sqrt{p(1-p)}$. The simulation in Figure \ref{fig:Example3} is performed with $\mu=\sigma^2=1$.

\begin{figure}[h!]
\centering
  \includegraphics[scale=0.45]{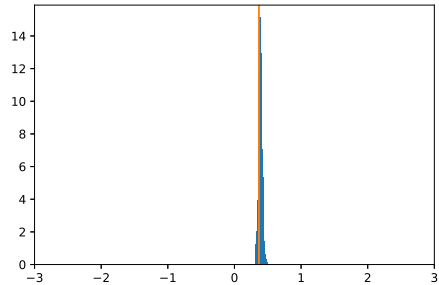}
  \caption{Histogram plot for $W_n(\boldsymbol{s})/(\mu\sqrt{n})$ using 750 trials  with $n=1000$ and $p=3/4$. }
  \label{fig:Example3}
\end{figure}
\end{example}

\subsection{Outline of the Paper}
This paper, which is intended for a wide probabilistic audience, takes us on a short journey through the spectral analysis of large random matrices and is organized as follows. Section \ref{Section2} highlights a few classic results in random matrix theory which serve as prerequisites for later sections. No background in random matrices is assumed. Section \ref{Section3} provides a computational lemma that we use to expand the partial moments of $W_n(\boldsymbol{a})$ in terms of power sum symmetric functions.  Section \ref{Section4} establishes the asymptotics for the partial moments of $W_n(\boldsymbol{a})$ by analyzing the limiting behavior of the power sum symmetric functions. Theorems \ref{MainTheorem1} and \ref{MainTheorem2}  are proved in Section \ref{Section5}. Theorem \ref{MainTheorem3} is proved in Section \ref{Section6} and Theorem \ref{MainTheorem4} is proved in Section \ref{Section7}.   Finally, we conclude with possible directions for future work and closing remarks.


\section{Random Matrix Prerequisites}\label{Section2}

 The limiting spectral analysis for large random matrices has become a widely studied topic in probability since the pioneering work of Eugene Wigner who  proved that the expected empirical spectral distribution of a normalized $n\times n$ (Wigner) matrix tends to the semicircular law $\mu_{\operatorname{sc}}$. To begin, suppose $A$ is an $n\times n$ Hermitian matrix with complex entries. The eigenvalues $\lambda_1, \lambda_2, \ldots, \lambda_n$ of $A$ are real and we can define
the one-dimensional distribution function
\begin{align*}
\mu_{A}(x)=\frac{1}{n}\Big\vert\{ i\leq n\,:\,\lambda_i\leq x\}\Big\vert
\end{align*} called the  \emph{empircal spectral distribution (ESD)} of $A$. The relation  \cite[Sec.~1.3.1]{Bai} 
\begin{align}
\frac{1}{n} \tr(A^k)=\int_{-\infty}^{\infty} x^k\, d\mu_A(x)\label{ESDTrace}
\end{align} plays a fundamental role in random matrix theory. Specifically, it turns the problem of establishing convergence, in whatever sense, for the ESD of a sequence $\{A_n\}$ of random matrices into the problem of establishing convergence of the sequence $\{\frac{1}{n}\tr(A_n^k)\}$ for each fixed $k$.

An \emph{$n\times n$ symmetric Wigner matrix} is an $n\times n$ real symmetric matrix whose entries, modulo the symmetry condition $\xi_{ij}=\xi_{ji}$, are independent. Specifically, we permit i.i.d.~mean zero entries $\xi_{ij}$ above the main diagonal and i.i.d.~mean zero entries $\xi_{ii}$ on the main diagonal. These two families need not share the same distribution, however. Moreover, we impose the condition that all entries have bounded moments and share a common second moment. If $B_n$ is an $n\times n$ symmetric Wigner matrix, then we denote $A_n=B_n/\sqrt{n}$. The pioneering work of Wigner \cite{Wigner1, Wigner2} establishes 
\begin{align}
\lim_{n\to \infty} \frac{1}{n}\mathbb{E} \tr(A_n^k)=\int_{-\infty}^{\infty} x^k \,d\mu_{\operatorname{sc}}(x) \label{Wigner}
\end{align} for all integers $k\geq 1$. In particular, the expected ESD of a normalized $n\times n$ symmetric Wigner matrix tends to the semicircular law whose density is given by 
\begin{align*}
f_{\operatorname{sc}}(x)=\frac{1}{2\pi}\sqrt{4-x^2} \,\mathbb{1}_{[-2,2]}.
\end{align*} This original result due to Wigner has been extended in several aspects. Grenander \cite{Grenander} proved the empirical spectral distribution converges to $\mu_{\operatorname{sc}}$ in probability. Arnold \cite{Arnold1, Arnold2} further improved this result by showing the empirical spectral distribution converges to the semicircular law almost surely. We remark that the matrix ensembles underlying \eqref{Wigner} can be generalized beyond those originally considered by Wigner and refer the reader to \cite{Bai} and \cite{Tao} for excellent surveys on the rich and rapidly developing field of spectral analysis of large random matrices.

\subsection{Almost Sure Convergence of the ESD}\label{Section3.1}

The form of \eqref{Wigner} that we need for later sections is due to Arnold. In particular, suppose $B_n$ is an $n \times n$ real symmetric matrix whose entries, modulo the symmetry condition $\xi_{ij}=\xi_{ji}$, are independent. Assume the upper-triangular entries share a common distribution with finite positive variance $\sigma^2$. In addition, we assume the diagonal entries also share a common distribution.  Furthermore, assume the entries $\xi_{ij}$ have finite fourth and sixth moments for $i>j$  and the diagonal entries $\xi_{ii}$ have  finite second and fourth moments. Define the normalized matrix $A_n=B_n/ (\sigma \sqrt{n})$.  Arnold proves that $\mu_{A_n}\to \mu_{\operatorname{sc}}$ almost surely in the sense that
\begin{align}
\int_{\mathbb{R}} \phi (x)\,d\mu_{A_n}(x)\to \int_{\mathbb{R}} \phi (x)\,d\mu_{\operatorname{sc}}(x)\label{Arnold}
\end{align} for all continuous and compactly supported test functions $\phi:\mathbb{R}\to \mathbb{R}$    \cite[Thm.~2]{Arnold2}.

\subsection{Real Symmetric Matrices with Independent Bounded Entries}\label{Section3.2}

 Here and throughout we adopt standard asymptotic notation. In particular, we write $f(n)=O\big( g(n)\big)$ if there exists a constant $C$ such  that $|f(n)|\leq Cg(n)$ for all $n$ sufficiently large. Moreover, we write $f(n)=o\big( g(n)\big)$ whenever $f(n)/g(n)\to 0$ as $n\to \infty$. 

 A result due to F\"uredi and Koml\'os allows us to analyze the limiting behavior of polynomials in $a_1, a_2, \ldots, a_n$. Suppose $A_n$ is an $n\times n$ real symmetric matrix with bounded entries. Moreover, we assume the entries of $A_n$, modulo the constraint $\xi_{ij}=\xi_{ji}$, are independent. Let $\mu=\mathbb{E}\xi_{ij}>0$ denote the common mean of the upper-triangular entries and let $\sigma^2=\mathbb{E}(\xi_{ij}-\mu)^2$ denote their common variance. Furthermore, suppose the diagonal entries share a common mean, $\nu=\mathbb{E}\xi_{ii}$.  F\"uredi and Koml\'os \cite[Thm.~1]{Furedi} show that the distribution of the largest eigenvalue $\lambda_1$ of $A_n$ can be approximated in order $n^{-1/2}$ by a normal distribution with mean $(n-1)\mu+\nu+\sigma^2/\mu$ and variance $2\sigma^2$. Moreover,  with high probability (w.h.p.) we have
\begin{align*}
|\lambda_i|<\sigma\sqrt{n}+O(n^{1/3}\log n)
\end{align*} whenever $i\neq 1$.


\section{A Computational Lemma for the Partial Moments of $W_n(\boldsymbol{a})$}\label{Section3}

Suppose $j\geq 1$ and $X_1, X_2, \ldots, X_n$ are i.i.d.~ random variables defined on the probability space $(\Omega, \mathcal{F}, \mathbb{P}_{\Omega})$. The lemma we present is a simple, albeit useful, computational tool for evaluating the moments
\begin{align*}
f_j(\boldsymbol{a})=f_j^{(n)}(\boldsymbol{a})=\mathbb{E}_{\Omega}\big( a_1X_1+a_2X_2+\cdots+a_nX_n)^j,
\end{align*} in which $\mathbb{E}_{\Omega}$ denotes expectation with respect to $X_1, X_2, \ldots, X_n$.  This lemma expresses $f_j(\boldsymbol{a})$ as a sum taken over all partitions of $j$ and involves  power sum symmetric polynomials in the variables $a_1, a_2, \ldots, a_n$. We recall these definitions below and refer the reader to \cite[Sec.~1.7]{Stanley1} and \cite[Sec.~7.7]{Stanley2} for in depth discussions.

A \emph{partition} of an integer $j\geq 1$ is a non-increasing sequence $\boldsymbol{\pi}=(j_1, j_2, \ldots, j_r)$ of positive integers such that $j_1+j_2+\cdots+j_r=j$. If $j\geq 2$ is an even integer, then a partition $\boldsymbol{\pi}=(j_1, j_2, \ldots, j_r)$  is a \textit{partition into even} parts if $j_1, j_2, \ldots, j_r$ are even integers. We let $P(j)$ and $E(j)$ denote the set of all partitions of $j$ and the set of all partitions of $j$ into even parts, respectively. We define 
\begin{align*}
y_{\boldsymbol{\pi}}=\prod_{i\geq 1}(i!)^{m_i}m_i!,
\end{align*} in which $m_i=m_i(\boldsymbol{\pi})$ denotes the multiplicity of $i$ appearing in a partition $\boldsymbol{\pi}$. Lastly, the \emph{power sum symmetric polynomial of degree $j$} in the variables $a_1, a_2, \ldots, a_n$ is the homogeneous polynomial defined by setting
\begin{align*}
\p_j(\boldsymbol{a})=\p_j^{(n)}(\boldsymbol{a})=a_1^j+a_2^j+\cdots+a_n^j.
\end{align*} We often denote $\p_j=\p_j(\boldsymbol{a})$ for brevity when there is no risk of confusion.

\begin{lemma}\label{Lemma:Tracial}
Let $j\geq 1$ be any integer and suppose $X_1, X_2, \ldots, X_n$ are i.i.d.~random variables which admit a moment generating function. If $\kappa_1, \kappa_2,\ldots$ denotes the cumulants of the $X_i$, then
\begin{align*}
f_j(\boldsymbol{a})=j!\sum_{\boldsymbol{\pi}\in P(j)} \frac{ \kappa_{\boldsymbol{\pi}}}{y_{\boldsymbol{\pi}}}  \p_{\boldsymbol{\pi}},
\end{align*} where $\kappa_{\boldsymbol{\pi}}=\kappa_{j_1}\kappa_{j_2}\cdots \kappa_{j_r}$ and $\p_{\boldsymbol{\pi}}=\p_{j_1}\p_{j_2}\cdots \p_{j_r}$ given $\boldsymbol{\pi}=(j_1, j_2, \ldots, j_r)$.
\end{lemma}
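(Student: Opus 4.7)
The plan is to proceed via moment and cumulant generating functions. Since the $X_i$ admit a moment generating function, so does $Y := a_1 X_1 + \cdots + a_n X_n$, and all moments of $Y$ can be read off from the power-series expansion of $\mathbb{E}[e^{tY}]$ about $t=0$; in particular
\begin{align*}
f_j(\boldsymbol{a}) \;=\; j!\,[t^j]\,\mathbb{E}[e^{tY}].
\end{align*}
The strategy is to write $\mathbb{E}[e^{tY}]$ as the exponential of a cumulant series in which the coefficients are exactly $\kappa_k\,\mathpzc{p}_k(\boldsymbol{a})/k!$, and then extract the coefficient of $t^j$ by the usual multinomial expansion, identifying the resulting multi-index sum with the partition sum on the right-hand side.

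First I would use independence of the $X_i$ to factor the MGF and take logarithms:
\begin{align*}
\log \mathbb{E}[e^{tY}] \;=\; \sum_{i=1}^n \log \mathbb{E}[e^{a_i t X_i}] \;=\; \sum_{i=1}^n \sum_{k\ge 1} \frac{\kappa_k}{k!}(a_i t)^k \;=\; \sum_{k\ge 1} \frac{\kappa_k\,\mathpzc{p}_k(\boldsymbol{a})}{k!}\, t^k,
\end{align*}
where the middle equality uses the standard definition of the cumulants as the coefficients of the cumulant generating function, and the last equality collects $a_i^k$ across $i$ into the power sum $\mathpzc{p}_k(\boldsymbol{a})$. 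Exponentiating and formally expanding via the exponential series gives
\begin{align*}
\mathbb{E}[e^{tY}] \;=\; \sum_{m_1,m_2,\ldots \ge 0}\;\prod_{k\ge 1} \frac{1}{m_k!}\left(\frac{\kappa_k\,\mathpzc{p}_k}{k!}\right)^{m_k} t^{\,k m_k},
\end{align*}
so that the coefficient of $t^j$ is a sum over tuples $(m_1,m_2,\ldots)$ with $\sum_k k m_k = j$.

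Next I would interpret those tuples as partitions: a tuple $(m_1, m_2, \ldots)$ with $\sum k m_k = j$ is the same data as a partition $\boldsymbol{\pi} \in P(j)$ in which $k$ appears with multiplicity $m_k = m_k(\boldsymbol{\pi})$. Under this identification $\prod_k \kappa_k^{m_k} = \kappa_{\boldsymbol{\pi}}$, $\prod_k \mathpzc{p}_k^{m_k} = \mathpzc{p}_{\boldsymbol{\pi}}$, and $\prod_k (k!)^{m_k} m_k! = y_{\boldsymbol{\pi}}$. Multiplying by $j!$ yields
\begin{align*}
f_j(\boldsymbol{a}) \;=\; j!\sum_{\boldsymbol{\pi}\in P(j)} \frac{\kappa_{\boldsymbol{\pi}}}{y_{\boldsymbol{\pi}}}\,\mathpzc{p}_{\boldsymbol{\pi}},
\end{align*}
which is the claimed identity.

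The only real obstacle is a bookkeeping one: making sure the exponential-of-series expansion, the indexing by multiplicities, and the three products defining $\kappa_{\boldsymbol{\pi}}$, $\mathpzc{p}_{\boldsymbol{\pi}}$, $y_{\boldsymbol{\pi}}$ line up correctly. The MGF hypothesis ensures the formal series manipulations are legitimate in a neighborhood of $0$, so convergence is not an issue and no tail estimates or truncation arguments are required. An alternative (and essentially equivalent) route is the multivariate Faà di Bruno / moment–cumulant formula applied directly to $Y$, but the generating-function proof above is the most transparent and avoids combinatorial identities beyond the exponential formula.
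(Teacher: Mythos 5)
Your proof is correct and follows essentially the same route as the paper: factor the moment generating function of $\sum_k a_k X_k$, pass to the cumulant generating function to obtain $\exp\bigl(\sum_{\ell\ge 1}\kappa_\ell\,\p_\ell\, t^\ell/\ell!\bigr)$, and extract the coefficient of $t^j$. The only cosmetic difference is that the paper packages the final coefficient extraction as the complete Bell polynomial identity $B_j(x_1,\ldots,x_j)=j!\sum_{\boldsymbol{\pi}\in P(j)} x_{\boldsymbol{\pi}}/y_{\boldsymbol{\pi}}$, whereas you expand the exponential directly by multiplicities --- the same computation carried out by hand.
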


\begin{proof}
The random variables $X_1, X_2, \ldots, X_n$ are i.i.d.~which implies that the moment generating function of $\sum_{k=1}^n a_kX_k$ takes  the form $\prod_{k=1}^n M(a_kt)$, where $M(t)$ denotes the moment generating function of the $X_i$ \cite[Sec.~9]{Billingsley}. Therefore,
\begin{align*}
\sum_{j=0}^{\infty} f_j(\boldsymbol{a}) \frac{t^j}{j!}&=\prod_{k=1}^n M(a_kt)=\exp\Big( K(a_1t)+K(a_2t)+\cdots +K(a_nt)\Big),
\end{align*} in which $K(t)=\log M(t)$ denotes the cumulant generating function of the $X_i$. The identity $K(t)=\sum_{\ell=1}^{\infty} \kappa_{\ell} t^{\ell}/\ell!$, which defines the cumulant sequence $\kappa_1, \kappa_2, \ldots$, implies
\begin{align}
\sum_{j=0}^{\infty} f_j(\boldsymbol{a}) \frac{t^j}{j!}=\exp\Big(\sum_{\ell=1}^{\infty} \kappa_{\ell}\p_{\ell} \frac{t^{\ell}}{\ell !} \Big)=\sum_{j=0}^{\infty} B_j( \kappa_1 \p_1, \kappa_2 \p_2, \ldots, \kappa_j\p_j) \frac{t^j}{j!},\label{Nbhd}
\end{align} where $B_j(x_1, \ldots, x_j)$ denotes the \emph{complete Bell polynomial} of degree $j$ in the variables $x_1, x_2, \ldots, x_j$ \cite[Sec.~II]{Bell} defined via the generating function
\begin{align}
\sum_{j=0}^{\infty} B_j( x_1, x_2, \ldots, x_j) \frac{t^j}{j!}=\exp\Big(\sum_{\ell=1}^{\infty} x_{\ell} \frac{t^{\ell}}{\ell !} \Big).\label{BellGen}
\end{align} Comparing coefficients in the above expression and applying the identity
\begin{align}
B_j(x_1, x_2, \ldots, x_j)=j !\sum_{\substack{i_1,i_2, \ldots, i_{j}\geq 0\\ i_1+2i_2+\cdots +j i_j=j  }}\prod_{s=1}^{j} \frac{x_s^{i_s}}{(s!)^{i_s} i_s!}=j!\sum_{\boldsymbol{\pi}\in P(j)} \frac{x_{\boldsymbol{\pi}}}{y_{\boldsymbol{\pi}}}\label{BellPartition}
\end{align} completes the proof.
\end{proof}

\begin{remark}
$X$ admits a moment generating function, which implies that its cumulant generating function $K(t)$ converges in a neighborhood of $t=0$ \cite[Sec.~9]{Billingsley}. Relation \eqref{Nbhd} therefore holds in a neighborhood of $t=0$. We use this fact repeatedly throughout the rest of the paper.
\end{remark}


\section{Asymptotics for Erd\H{o}s-R\'enyi-Gilbert Graphs}\label{Section4}

Here and throughout we let $\mathbb{E}_{\mathcal{G}}=\mathbb{E}_{\mathcal{G}(n,p)}$ and $\mathbb{P}_{\mathcal{G}}=\mathbb{P}_{\mathcal{G}(n,p)}$ denote expectation and probability, respectively, with respect to $\mathcal{G}(n,p)$. A fundamental fact is that the number of edges in a random graph of order $n$ follow a binomial distribution,
\begin{align*}
\mathbb{P}_{\mathcal{G}}\big(  |E(G)|=m \big)={N\choose m}p^m(1-p)^{N-m},
\end{align*} where $N={n\choose 2}$. Moreover, the adjacency matrix $A_n$ of random graph of order $n$ is a random $n\times n$ real symmetric matrix whose upper-triangular entries $\xi_{ij}$ are bounded independent random variables which have mean $\mu=p$ and variance $\sigma_p^2=p(1-p)$. The diagonal elements of $A_n$ satisfy $\nu=\mathbb{E}_{\mathcal{G}}[\xi_{ii}]=0$ since loops are not permitted. The result by  F\"uredi and Koml\'os \cite[Thm.~1]{Furedi}, which we outline in Section \ref{Section3.2},  implies that w.h.p.,
\begin{align}
|\lambda_1|=(n-1)p+1-p+O(n^{-1/2})=\big( p+o(1)\big)n\label{Erdos1}
\end{align} and
\begin{align}
|\lambda_2|< 2\sigma_p\sqrt{n}+O(n^{1/3}\log n)=\Big( 2\sigma_p+o(1)\Big) \sqrt{n}\label{Erdos2}.
\end{align}

\subsection{The Eigenvalue Case}

We now establish the  limiting behavior for the partial moments $f_j(\boldsymbol{\lambda})=\mathbb{E}_{\Omega}(\lambda_1X_1+\lambda_2X_2+\cdots+\lambda_nX_n)^j$ in which $X_1, X_2, \ldots, X_n$ are i.i.d.~random variables defined on $(\Omega, \mathcal{F}, \mathbb{P}_{\Omega})$ and $\lambda_1, \lambda_2,\ldots, \lambda_n$ correspond to eigenvalues in the Erd\H{o}s-R\'enyi-Gilbert model. We recall $\p_k(\boldsymbol{\lambda})$ denotes the power sum symmetric polynomial of degree $k$ in the variables $\lambda_1, \lambda_2, \ldots, \lambda_n$. 

\begin{lemma}\label{OddTrace}
Let $k\geq 1$ be an odd integer. If $\lambda_1, \lambda_2, \ldots, \lambda_n$ correspond to eigenvalues in the Erd\H{o}s-R\'enyi-Gilbert $\mathcal{G}(n,p)$ model, then  we have $\p_k(\boldsymbol{\lambda})=o(n^{1+k/2})$ almost surely. 
\end{lemma}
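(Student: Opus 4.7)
The plan is to exploit the almost sure convergence of the empirical spectral distribution (ESD) to the semicircular law together with the vanishing of its odd moments. Since $\p_k(\boldsymbol{\lambda})=\tr(A_n^k)$, the lemma is a moment statement about the spectrum of the adjacency matrix $A_n$.

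First I would centralize the adjacency matrix: set $B_n=A_n-p(J-I)$, where $J$ is the $n\times n$ all-ones matrix. The off-diagonal entries of $B_n$ are then bounded, independent, centered, and share the common variance $\sigma_p^2=p(1-p)$, while the diagonal vanishes. Hence $B_n$ satisfies the hypotheses of both Arnold's theorem (Section~\ref{Section3.1}) and of F\"uredi--Koml\'os (Section~\ref{Section3.2}). Arnold supplies the almost sure convergence $\mu_{B_n/(\sigma_p\sqrt{n})}\to\mu_{\mathrm{sc}}$ against continuous compactly supported test functions, and F\"uredi--Koml\'os supplies the almost sure operator-norm bound $\|B_n\|_{\mathrm{op}}\le (2\sigma_p+o(1))\sqrt{n}$, so the support of $\mu_{B_n/(\sigma_p\sqrt{n})}$ is eventually contained in a fixed compact interval $[-C,C]$.

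Next, I would use the support bound to replace the unbounded monomial $x^k$ by a compactly supported continuous function that agrees with $x^k$ on $[-C,C]$. Arnold's theorem then upgrades to convergence of the $k$-th moment:
\begin{align*}
\frac{\tr(B_n^k)}{n\,(\sigma_p\sqrt{n})^k}\;\longrightarrow\;\int x^k\,d\mu_{\mathrm{sc}}(x)\quad\text{a.s.}
\end{align*}
For odd $k$ the right-hand side vanishes by symmetry of the semicircle, giving $\tr(B_n^k)=o(n^{1+k/2})$ almost surely.

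Finally I would transfer the estimate from $B_n$ to $A_n$. Writing $A_n+pI=B_n+pJ$ exhibits $A_n$ (up to a scalar shift) as a rank-one perturbation of $B_n$; Cauchy interlacing then peels off a single outlier eigenvalue, which F\"uredi--Koml\'os identifies as $\lambda_1(A_n)=(p+o(1))n$, while the bulk $\{\lambda_i(A_n)\}_{i\ge 2}$ is squeezed between consecutive eigenvalues of $B_n$ and satisfies $|\lambda_i|\le (2\sigma_p+o(1))\sqrt{n}$. Repeating the moment-convergence argument for the bulk yields $\sum_{i\ge 2}\lambda_i^k=o(n^{1+k/2})$ a.s.\ (the $k=1$ case being trivial since $\tr(A_n)=0$), which together with the separate accounting for the outlier contribution gives the claim. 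The main technical obstacle is this transfer step: interlacing is an eigenvalue-by-eigenvalue statement, and one needs the summed perturbations to preserve the $o(n^{1+k/2})$ bound coming from the cancellation in the odd semicircular moments. The rank-one structure of $pJ$ is essential here, since it displaces exactly one eigenvalue by an amount of order $n$ while perturbing the remaining spectrum only at an $O(1)$ scale.
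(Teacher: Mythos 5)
Your plan---center the adjacency matrix, use the norm bound to truncate $x^k$ and upgrade Arnold's weak convergence \eqref{Arnold} to convergence of the $k$-th moment for the centered matrix, then transfer back across the rank-one perturbation $pJ$ by interlacing---is sound in its first three steps, and the bulk half of the transfer does go through: interlacing gives $\sum_{i\ge 2}|\lambda_i(A_n)-\mu_i(B_n)|\le (\mu_1-\mu_n)+np=O(n)$, whence $\sum_{i\ge 2}\lambda_i^k=\sum_i\mu_i^k+O(n^{(k+1)/2})=o(n^{1+k/2})$ w.h.p. But your final sentence, that the ``separate accounting for the outlier contribution gives the claim,'' is a genuine gap, and it is not repairable: by \eqref{Erdos1} the outlier contributes $\lambda_1^k=\big((p+o(1))n\big)^k=(p^k+o(1))n^k$, which for odd $k\ge 3$ \emph{dominates} $n^{1+k/2}$ rather than being absorbed into it. Your own decomposition $\p_k(\boldsymbol{\lambda})=\lambda_1^k+\sum_{i\ge 2}\lambda_i^k$ therefore yields $\p_k(\boldsymbol{\lambda})=(p^k+o(1))n^k$ for every odd $k\ge 3$; the asserted bound $o(n^{1+k/2})$ holds only for $k=1$, where it is trivial since $\p_1=\tr(A_n)=0$. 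A concrete check at $k=3$: $\p_3(\boldsymbol{\lambda})$ is six times the number of triangles, which is $(1+o(1))\binom{n}{3}p^3\asymp p^3n^3$ almost surely, not $o(n^{5/2})$. (Two smaller slips: the F\"uredi--Koml\'os bound quoted in Section \ref{Section3.2} is a w.h.p.\ statement, not almost sure, and it is stated there for $\mu>0$, so your centered $B_n$ needs their mean-zero variant.)

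For comparison, the paper's own proof skips your centering and transfer entirely: it applies \eqref{ESDTrace} and Arnold's theorem directly to the \emph{uncentered} adjacency matrix and concludes $\frac{1}{n}\tr(A_n^k)\to\int x^k\,d\mu_{\operatorname{sc}}(x)=0$. Your proposal is, in effect, a careful execution of what such an argument would actually require, and it exposes exactly where that step fails: \eqref{Arnold} is convergence against continuous \emph{compactly supported} test functions, which controls no moment once spectral mass escapes every compact set, and here the normalized Perron eigenvalue sits at $(p/\sigma_p+o(1))\sqrt{n}$, contributing $\asymp n^{k/2-1}\to\infty$ to the normalized $k$-th moment for $k\ge 3$. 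Indeed, if weak ESD convergence implied moment convergence, the same reasoning would give $\p_i=O(n^{1+i/2})$ for even $i$, contradicting the paper's own estimate \eqref{ErdosBound2}, whose proof (domination by $\lambda_1$) applies verbatim to odd $i\ge 3$ since $\lambda_1>0$. So your approach is strictly more careful than the paper's, but the statement you were asked to prove is false for odd $k\ge 3$, and an honest completion of your outlier accounting would have discovered this. The damage downstream is contained only because in the applications (Theorems \ref{MainTheorem1} and \ref{MainTheorem2}) all odd cumulants $\kappa_3,\kappa_5,\ldots$ vanish and $\p_1=0$, so the partitions in Proposition \ref{Proposition:AsymptoticEigen} that would pick up the uncancelled $(p^k+o(1))n^k$ terms carry zero coefficient anyway.
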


\begin{proof}
Let $B_n$ be the adjacency matrix of a random graph of order $n$. This matrix satisfies the hypotheses in Section \ref{Section3.1}. Moreover, the variance of the upper-triangular entries is given by $\sigma_p^2=p(1-p)$. The ESD of the normalized matrix  $A_n=B_n/(\sqrt{n}\sigma_p)$ converges almost surely to the semicircular law by \cite[Thm.~2]{Arnold2} as seen in Section \ref{Section3.1}. Therefore, we can use \eqref{ESDTrace} and the identity  $\tr(B_n^k)=\p_k(\boldsymbol{\lambda})$ to conclude
\begin{align}
\frac{\p_k(\boldsymbol{\lambda})}{n (\sqrt{n}\sigma_p)^k}=\frac{1}{n} \tr(A_n^k)\to \int_{-\infty}^{\infty} x^k \,d\mu_{\operatorname{sc}}(x)\label{ArnoldLimit}
\end{align} almost surely. The symmetry of the the semicircular density and the fact that $k$ is odd imply that $\frac{\p_k(\boldsymbol{\lambda})}{n (\sqrt{n}\sigma_p)^k}\to 0 $ almost surely. The claim follows.
\end{proof}

\begin{proposition}\label{Proposition:AsymptoticEigen}
Let $j\geq 1$ be any integer and let $X_1, X_2, \ldots, X_n$ be i.i.d.~random variables with cumulant sequence $\kappa_1, \kappa_2,\ldots$. Define
\begin{align*}
c_j(p)=j!\sum_{\boldsymbol{\pi}\in E(j)} \frac{\kappa_{\boldsymbol{\pi}}}{y_{\boldsymbol{\pi}}} p^{j-m_2}  ,
\end{align*}where $\kappa_{\boldsymbol{\pi}}$ and $y_{\boldsymbol{\pi}}$ are defined as in Lemma \ref{Lemma:Tracial} and $E(j)$ denotes the set of partitions of $j$ into even parts. The partial moments $f_j(\boldsymbol{\lambda})$ satisfy the following, where $o(1)$ denotes a term tending to zero as $n\to \infty$ with $j$ fixed.\\

\noindent (a) If $j$ is odd, then $n^{-j}f_j(\boldsymbol{\lambda})=o(1)$ w.h.p. \\

\noindent(b) If $j$ is even, then $n^{-j}f_j(\boldsymbol{\lambda})=c_j(p)+o(1)$ w.h.p.
\end{proposition}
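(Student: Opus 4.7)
The plan is to begin from Lemma 3.1 applied to the weights $\boldsymbol{a}=\boldsymbol{\lambda}$:
\[ f_j(\boldsymbol{\lambda})=j!\sum_{\boldsymbol{\pi}\in P(j)}\frac{\kappa_{\boldsymbol{\pi}}}{y_{\boldsymbol{\pi}}}\p_{\boldsymbol{\pi}}(\boldsymbol{\lambda}). \]
After dividing by $n^j$, each summand becomes $(\kappa_{\boldsymbol{\pi}}/y_{\boldsymbol{\pi}})\prod_{i=1}^{r}\p_{j_i}(\boldsymbol{\lambda})/n^{j_i}$, so the problem reduces to identifying the w.h.p.~limit of $\p_k(\boldsymbol{\lambda})/n^k$ for each fixed $k\geq 1$ and then assembling finitely many such limits.

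The core analytic step is an outlier-versus-bulk decomposition $\p_k(\boldsymbol{\lambda})=\lambda_1^k+\sum_{\ell\geq 2}\lambda_\ell^k$ combined with the F\"uredi--Koml\'os bounds \eqref{Erdos1} and \eqref{Erdos2}. Relation \eqref{Erdos1} yields $\lambda_1^k=p^kn^k(1+o(1))$ w.h.p., while \eqref{Erdos2} gives $\sum_{\ell\geq 2}|\lambda_\ell|^k\leq(n-1)(2\sigma_p+o(1))^kn^{k/2}=O(n^{1+k/2})$ w.h.p. For even $k\geq 4$ the bulk is $o(n^k)$, and so $\p_k/n^k\to p^k$. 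The borderline case $k=2$ must be treated separately, since there both pieces are of order $n^2$; the direct identity $\p_2(\boldsymbol{\lambda})=\tr(A^2)=2m$, with $m\sim\mathrm{Bin}(\binom{n}{2},p)$, together with the weak law of large numbers gives $\p_2/n^2\to p$. For odd $k\geq 3$, Lemma 4.1 provides $\p_k(\boldsymbol{\lambda})=o(n^{1+k/2})=o(n^k)$; and for $k=1$ we trivially have $\p_1=\tr(A)=0$, so $\p_k/n^k\to 0$ in every odd sub-case.

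Assembling the partition sum: for $\boldsymbol{\pi}=(j_1,\ldots,j_r)\in P(j)$, writing $m_2=m_2(\boldsymbol{\pi})$ for the multiplicity of the part $2$, the limits above combine to yield
\[ \frac{\p_{\boldsymbol{\pi}}(\boldsymbol{\lambda})}{n^j}=\prod_{i=1}^{r}\frac{\p_{j_i}(\boldsymbol{\lambda})}{n^{j_i}}\longrightarrow\begin{cases} p^{m_2}\prod_{j_i\geq 4}p^{j_i}=p^{j-m_2},&\boldsymbol{\pi}\in E(j),\\ 0,&\boldsymbol{\pi}\notin E(j),\end{cases} \]
w.h.p., where the zero in the second case follows because any odd-part factor $\p_{j_i}/n^{j_i}\to 0$ dominates the bounded even-part factors. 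If $j$ is odd then $E(j)=\emptyset$, so every term vanishes, establishing~(a). If $j$ is even, only the partitions in $E(j)$ survive and one obtains $n^{-j}f_j(\boldsymbol{\lambda})\to c_j(p)$, establishing~(b). The finitely many w.h.p.~statements (one per $k\leq j$ and one per $\boldsymbol{\pi}\in P(j)$) combine into a single w.h.p.~statement by a standard union bound.

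I anticipate the main obstacle to be the delicate $k=2$ case, where $\lambda_1^2$ and $\sum_{\ell\geq 2}\lambda_\ell^2$ are both of order $n^2$ and a naive extension of the outlier heuristic would give $\p_2/n^2\to p^2$ instead of the correct limit $p$, producing the wrong exponent $p^j$ in place of $p^{j-m_2}$ in $c_j(p)$. Using the edge-count identity $\p_2=2m$ sidesteps the competition entirely, and is precisely what makes the exponent of $p$ in the final formula depend on the part-$2$ multiplicity rather than on $j$ alone.
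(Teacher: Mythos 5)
Your proposal is correct and takes essentially the same route as the paper's own proof: Lemma \ref{Lemma:Tracial} to reduce to power sums, the edge-count identity $\p_2(\boldsymbol{\lambda})=2m$ with the weak law of large numbers for the part $2$, the F\"uredi--Koml\'os bounds \eqref{Erdos1} and \eqref{Erdos2} for even parts $i>2$, and Lemma \ref{OddTrace} together with $\p_1(\boldsymbol{\lambda})=\tr(A_n)=0$ to kill partitions with odd parts. Your closing observation about the borderline $k=2$ case is precisely the paper's own remark explaining why the exponent in $c_j(p)$ is $p^{j-m_2}$ rather than $p^j$.
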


\begin{proof}
Denote $\p_i=\p_i(\boldsymbol{\lambda})$ for brevity.  The number of edges in a random graph $G_n$ of order $n$ is a binomial random variable with parameters $N={n\choose 2}$ and $p$. The expected number of edges  in $G_n$ is therefore given by  $\mathbb{E}_{\mathcal{G}}[m]=pN$. The weak law of large numbers  implies that $m$ is  tightly concentrated around its mean for large $n$. Therefore, we have $m=\big( p/2+o(1)\big)n^2$ w.h.p. If $A_n$ denotes the adjaceny matrix of $G_n$, then $\p_2=\tr(A_n^2)=2m$ \cite[Thm.~3.1.1]{Cvetkovic}, which implies that w.h.p.,
\begin{align}
\mathpzc{p}_2=\big(p+o(1)\big)n^2.\label{ErdosBound1}
\end{align}  If $i>2$ is even, then consider the bound $|\lambda_1|^i\leq \p_i\leq |\lambda_1|^i+n|\lambda_2|^i.$
Inequalities \eqref{Erdos1} and \eqref{Erdos2} imply that w.h.p., 
\begin{align*}
\big(p^i+o(1)\big)n^i\leq \mathpzc{p}_i<\big(p^i+o(1)\big)n^i+O(n^{1+i/2}).
\end{align*} Observe that $n^{1-i/2}\to 0$ as $n\to \infty$ since $i>2$. We conclude that w.h.p.,
\begin{align}
\p_i=\big( p^i+o(1)\big)n^i.\label{ErdosBound2}
\end{align} Let $\boldsymbol{\pi}=(j_1,j_2, \ldots, j_r)$ be a partition of $j$. Lemma \ref{OddTrace} together with relations \eqref{ErdosBound1} and \eqref{ErdosBound2} imply that w.h.p,
\begin{align}
\p_{j_1}\p_{j_2}\cdots \p_{j_r}=o(n^j)\label{ErdosOdd}
\end{align} whenever $\boldsymbol{\pi}$ contains an odd integer larger than one. If $m_1(\boldsymbol{\pi})\neq 0$, then \eqref{ErdosOdd} still holds since $\p_1=\tr(A_n)=0$ for simple graphs. Any partition of an odd integer must contain an odd part. Lemma \ref{Lemma:Tracial} implies that w.h.p.,
\begin{align*}
f_j(\boldsymbol{\lambda})=o(n^j)
\end{align*} whenever $j$ is odd. This proves (a).  If $j$ is even, then $\p_{j_1}\p_{j_2}\cdots \p_{j_r}=o(n^j)$ w.h.p.~unless $\boldsymbol{\pi}$ is a partition of $j$ into even parts. Lemma \ref{Lemma:Tracial}, together with relations \eqref{ErdosBound1} and \eqref{ErdosBound2}, imply that w.h.p.,
\begin{align*}
f_j(\boldsymbol{\lambda})=\Bigg( j! \sum_{\boldsymbol{\pi}\in E(d)} \frac{\kappa_{\boldsymbol{\pi}}}{y_{\boldsymbol{\pi}}} p^{j-m_2}+o(1)  \Bigg)n^j,
\end{align*} which proves (b). We remark that the $m_2$ term appearing in the last expression occurs because of the discrepency in the power of $p$ occuring in \eqref{ErdosBound1} and \eqref{ErdosBound2}.
\end{proof}

\subsection{The Singular Value Case}

We now establish the  limiting behavior for the partial moments $f_j(\boldsymbol{s})=\mathbb{E}_{\Omega}(s_1X_1+s_2X_2+\cdots+s_nX_n)^j$ in which $X_1, X_2, \ldots, X_n$ are i.i.d.~random variables defined on $(\Omega, \mathcal{F}, \mathbb{P}_{\Omega})$ and $s_1, s_2,\ldots, s_n$ correspond to singular values in the Erd\H{o}s-R\'enyi-Gilbert model.

\begin{proposition}\label{Proposition:AsymptoticSingular}
Let $j\geq 1$ be any integer and let $X_1, X_2, \ldots, X_n$ be i.i.d.~mean zero random variables with cumulant sequence $\kappa_1, \kappa_2,\ldots$. The partial moments $f_j(\boldsymbol{s})$ satisfy 
\begin{align*}
n^{-j}f_j(\boldsymbol{s})=j!\sum_{\boldsymbol{\pi}\in P_0(j)} \frac{ \kappa_{\boldsymbol{\pi}}}{y_{\boldsymbol{\pi}}}  p^{j-m_2}+o(1)
\end{align*} w.h.p., where $\kappa_{\boldsymbol{\pi}}$ and $y_{\boldsymbol{\pi}}$ are defined as in Lemma \ref{Lemma:Tracial} and $P_0(j)$ denotes the set of all partitions of $j$ for which $m_1=0$.
\end{proposition}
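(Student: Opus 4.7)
The plan is to mirror the proof of Proposition \ref{Proposition:AsymptoticEigen} but with $\boldsymbol{\lambda}$ replaced by $\boldsymbol{s}$, exploiting the crucial simplifications that come from (i) the hypothesis $\mathbb{E}[X]=\kappa_1=0$, and (ii) the fact that $s_i=|\lambda_i|$ so no sign cancellation is available (or needed) in the power sums. First, I would apply Lemma \ref{Lemma:Tracial} to write
\begin{align*}
f_j(\boldsymbol{s})=j!\sum_{\boldsymbol{\pi}\in P(j)} \frac{\kappa_{\boldsymbol{\pi}}}{y_{\boldsymbol{\pi}}} \p_{\boldsymbol{\pi}}(\boldsymbol{s}).
\end{align*}
Since $\kappa_1=0$, any partition $\boldsymbol{\pi}$ with $m_1(\boldsymbol{\pi})\geq 1$ contributes $\kappa_{\boldsymbol{\pi}}=0$, so the sum collapses automatically to $\boldsymbol{\pi}\in P_0(j)$. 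This is the payoff of the mean-zero assumption: unlike in the eigenvalue case we do not need to separately argue that the $\p_1$-terms vanish (which was free there because $\tr A_n=0$, but would fail badly here since $\p_1(\boldsymbol{s})$ is the graph energy, of order $n^{3/2}$).

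Next, I would establish the asymptotic $\p_k(\boldsymbol{s})=\bigl(p^k+o(1)\bigr)n^k$ w.h.p.\ for every $k\geq 3$, and $\p_2(\boldsymbol{s})=\bigl(p+o(1)\bigr)n^2$ w.h.p. The case $k=2$ is immediate since $\p_2(\boldsymbol{s})=\p_2(\boldsymbol{\lambda})=\tr(A_n^2)=2m$ and $m$ concentrates at $(p/2)n^2$, exactly as in \eqref{ErdosBound1}. For $k\geq 3$, write $\p_k(\boldsymbol{s})=|\lambda_1|^k+\sum_{i\geq 2}|\lambda_i|^k$. The F\"uredi--Koml\'os bounds \eqref{Erdos1}, \eqref{Erdos2} give $|\lambda_1|^k=(p^k+o(1))n^k$ w.h.p.\ and $\sum_{i\geq 2}|\lambda_i|^k\leq n\cdot \bigl(2\sigma_p\sqrt{n}+o(\sqrt n)\bigr)^k=O(n^{1+k/2})$ w.h.p. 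Since $1+k/2<k$ for $k\geq 3$, the bulk is negligible and the top eigenvalue dominates. Note that this works uniformly in the parity of $k$ — this is the essential difference from Lemma \ref{OddTrace}, where for odd $k$ the semicircle symmetry produced cancellation; here the absolute values forbid cancellation but the same conclusion holds for a different reason.

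Finally, I would combine these asymptotics. For any $\boldsymbol{\pi}=(j_1,\dots,j_r)\in P_0(j)$ with multiplicities $m_2,m_3,\dots$, the estimates above give, w.h.p.,
\begin{align*}
\p_{\boldsymbol{\pi}}(\boldsymbol{s})=\prod_{i\geq 2}\p_i(\boldsymbol{s})^{m_i}=\bigl(p^{m_2}\cdot p^{3m_3+4m_4+\cdots}+o(1)\bigr)n^{\,2m_2+3m_3+\cdots}=\bigl(p^{j-m_2}+o(1)\bigr)n^j,
\end{align*}
where I used $\sum_{i\geq 2} im_i=j$ (because $m_1=0$) and the exponent of $p$ simplifies to $m_2+\sum_{i\geq 3}im_i=j-m_2$. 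Since $P_0(j)$ is finite and the $o(1)$ in each summand is deterministic in $n$ on the good event, substituting back into the restricted Lemma \ref{Lemma:Tracial} expansion yields the claimed identity
\begin{align*}
n^{-j}f_j(\boldsymbol{s})=j!\sum_{\boldsymbol{\pi}\in P_0(j)}\frac{\kappa_{\boldsymbol{\pi}}}{y_{\boldsymbol{\pi}}}p^{j-m_2}+o(1)
\end{align*}
w.h.p. The main conceptual obstacle is the odd-$k$ case for $\p_k(\boldsymbol{s})$: one must resist the temptation to reuse Lemma \ref{OddTrace} (which gives $o(n^{1+k/2})$ through symmetry) and instead use the crude but sharper bound coming from the top eigenvalue alone; routine bookkeeping for the exponent of $p$ is the only other thing to watch.
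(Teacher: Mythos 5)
Your proposal is correct and follows essentially the same route as the paper's proof: apply Lemma \ref{Lemma:Tracial}, use $\kappa_1=0$ to kill every partition with $m_1\geq 1$, and note that since singular values are moduli of eigenvalues, the sandwich $s_1^k\leq \p_k(\boldsymbol{s})\leq s_1^k+ns_2^k$ with the F\"uredi--Koml\'os bounds gives $\p_k(\boldsymbol{s})=(p^k+o(1))n^k$ for all $k\geq 2$ regardless of parity --- this is exactly what the paper means by ``the same exact reasoning behind \eqref{ErdosBound2}'' for odd $i\neq 1$. Your write-up merely makes explicit the exponent bookkeeping and the observation (also correct) that Lemma \ref{OddTrace} is neither available nor needed here.
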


\begin{proof}
The singular values of a random graph correspond to the moduli of its eigenvalues. Therefore,  \eqref{ErdosBound1} and \eqref{ErdosBound2} hold for $\p_i=\p_i(\boldsymbol{s})$ when $i$ is even. The same exact reasoning behind \eqref{ErdosBound2} implies $\p_i=\big(p^i+o(1)\big)n^i$ when $i\neq 1$ is odd. Finally, $\kappa_1=0$ since $X_1, X_2, \ldots, X_n$ have mean zero. Lemma \ref{Lemma:Tracial} implies the claim.
\end{proof}

\begin{proposition}\label{Proposition:AsymptoticEnergy}
Let $j\geq 1$ be any integer and let $X_1, X_2, \ldots, X_n\sim X$ be i.i.d.~random variables in which $X$ admits a moment generating function and has non-zero mean $\mu$. The partial moments $f_j(\boldsymbol{s})$ satisfy
\begin{align*}
\frac{f_j(\boldsymbol{s})}{ (\mu n^{3/2})^j}=\Big(\frac{8\sigma_p}{3\pi}\Big)^j +o(1)
\end{align*} w.h.p., in which  $\sigma_p^2=p(1-p)$.
\end{proposition}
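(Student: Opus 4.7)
The plan is to apply Lemma \ref{Lemma:Tracial} to expand $f_j(\boldsymbol{s})$ as a sum over partitions and identify which partition dominates. Writing
\begin{align*}
f_j(\boldsymbol{s})=j!\sum_{\boldsymbol{\pi}\in P(j)} \frac{\kappa_{\boldsymbol{\pi}}}{y_{\boldsymbol{\pi}}}\p_{\boldsymbol{\pi}}(\boldsymbol{s}),
\end{align*}
I would determine the order in $n$ of each product $\p_{\boldsymbol{\pi}}(\boldsymbol{s})=\prod_{k\geq 1}\p_k(\boldsymbol{s})^{m_k}$ with $m_k=m_k(\boldsymbol{\pi})$. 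For every $k\geq 2$, the argument used in the proof of Proposition \ref{Proposition:AsymptoticSingular} applies unchanged: the sandwich $|\lambda_1|^k\leq \p_k(\boldsymbol{s})\leq |\lambda_1|^k+n|\lambda_2|^k$ combined with \eqref{Erdos1} and \eqref{Erdos2} gives $\p_k(\boldsymbol{s})=(p^k+o(1))n^k$ w.h.p.

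The new ingredient needed is the graph-energy asymptotic
\begin{align*}
\p_1(\boldsymbol{s})=\sum_{i=1}^{n}|\lambda_i|=\Big(\frac{8\sigma_p}{3\pi}+o(1)\Big)n^{3/2}\quad\text{w.h.p.}
\end{align*}
Granted this, $\p_{\boldsymbol{\pi}}(\boldsymbol{s})$ has order $n^{3m_1/2+\sum_{k\geq 2}km_k}=n^{j+m_1/2}$ since $\sum_k km_k=j$. This exponent is strictly maximized at $m_1=j$, corresponding to the partition $\boldsymbol{\pi}_0=(1,1,\ldots,1)$, for which $\kappa_{\boldsymbol{\pi}_0}=\kappa_1^j=\mu^j$ and $y_{\boldsymbol{\pi}_0}=j!$. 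Every other partition contributes $o(n^{3j/2})$, so w.h.p.
\begin{align*}
f_j(\boldsymbol{s})=j!\cdot\frac{\mu^j}{j!}\cdot\p_1(\boldsymbol{s})^j+o(n^{3j/2})=\mu^j\Big(\frac{8\sigma_p}{3\pi}\Big)^j n^{3j/2}+o(n^{3j/2}),
\end{align*}
and dividing by $(\mu n^{3/2})^j$ delivers the claim.

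The main obstacle is the energy asymptotic, which does not fit directly into the framework of Section \ref{Section4} since the test function $|x|$ is not compactly supported and since $\p_1(\boldsymbol{\lambda})$ vanishes in the signed case. My plan is to invoke the almost-sure convergence $\mu_{A_n}\to\mu_{\operatorname{sc}}$ for $A_n=B_n/(\sigma_p\sqrt{n})$ from Section \ref{Section3.1} against a continuous compactly supported truncation $\eta_M$ of $|x|$ that agrees with $|x|$ on $[-M,M]$ for some fixed $M>2$. The bound \eqref{Erdos2} guarantees $|\lambda_i|/(\sigma_p\sqrt{n})\leq 2+o(1)<M$ for all $i\geq 2$ w.h.p., so $\eta_M$ and $|\cdot|$ coincide at the bulk eigenvalues; the outlier $\lambda_1=(p+o(1))n$ from \eqref{Erdos1} contributes only $|\lambda_1|/n^{3/2}=O(n^{-1/2})$ to the normalized energy and is therefore negligible. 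A direct computation then supplies the constant,
\begin{align*}
\int_{-2}^{2}|x|\,d\mu_{\operatorname{sc}}(x)=\frac{1}{\pi}\int_0^2 x\sqrt{4-x^2}\,dx=\frac{8}{3\pi},
\end{align*}
completing the proof.
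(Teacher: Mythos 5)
Your partition bookkeeping coincides exactly with the paper's: both proofs expand $f_j(\boldsymbol{s})$ via Lemma \ref{Lemma:Tracial}, isolate the all-ones partition (for which $\kappa_{\boldsymbol{\pi}}=\mu^j$ and $y_{\boldsymbol{\pi}}=j!$, so that term is $\mu^j\p_1^j$), and discard every other partition as $o(n^{3j/2})$ because $\p_{\boldsymbol{\pi}}(\boldsymbol{s})$ has order $n^{j+m_1/2}$ with $m_1<j$. The genuine difference is how you obtain the key input $\p_1(\boldsymbol{s})=\big(\tfrac{8\sigma_p}{3\pi}+o(1)\big)n^{3/2}$: the paper simply cites the graph-energy theorem of Du, Li and Li \cite[Thm.~1]{Du}, whereas you rederive it from the paper's own toolkit --- the almost sure ESD convergence \eqref{Arnold} for $B_n/(\sigma_p\sqrt{n})$ tested against a continuous, compactly supported truncation $\eta_M$ of $|x|$, with \eqref{Erdos2} guaranteeing w.h.p.~that the truncation is inactive on the bulk eigenvalues and \eqref{Erdos1} showing the Perron outlier contributes only $O(n^{-1/2})$ after the $n^{3/2}$ normalization; the constant $\int_{-2}^{2}|x|\,d\mu_{\operatorname{sc}}(x)=\tfrac{8}{3\pi}$ is computed correctly. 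Your route buys self-containedness and makes visible exactly why the $n^{3/2}$ scale and the factor $\sigma_p$ appear, at the cost of the truncation argument that \eqref{Arnold} forces because $|x|$ is not compactly supported and one normalized eigenvalue of size $\asymp\sqrt{n}$ must be removed by hand; your handling of both points is sound, and indeed this is essentially how the cited energy asymptotic is proved.

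One slip should be corrected: your claim that the sandwich argument gives $\p_k(\boldsymbol{s})=(p^k+o(1))n^k$ \emph{for every} $k\geq 2$ fails at $k=2$, where $n|\lambda_2|^2=(4\sigma_p^2+o(1))n^2$ is of the same order as $|\lambda_1|^2$, so the two sides of the sandwich do not pinch. The correct statement, which the paper obtains from $\p_2=\tr(A_n^2)=2m$ and edge concentration, is $\p_2=(p+o(1))n^2$ --- constant $p$, not $p^2$; this discrepancy is precisely the origin of the $p^{j-m_2}$ factor in Propositions \ref{Proposition:AsymptoticEigen} and \ref{Proposition:AsymptoticSingular}, where the constant matters. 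For the present proposition the slip is harmless, since your dominance argument only uses $\p_k(\boldsymbol{s})=O(n^k)$ for $k\geq 2$, which the sandwich upper bound does deliver; the conclusion stands as written.
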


\begin{proof}
The quantity $\p_1=\p_1(\mathbf{s})$ is called the graph energy \cite{Gutman} and \cite[Thm.~1]{Du} establishes that w.h.p., 
\begin{align*}
\frac{\p_1}{n^{3/2}}=\frac{8\sigma_p}{3\pi}+o(1).
\end{align*} Lemma \ref{Lemma:Tracial} asserts
\begin{align}
f_j(\boldsymbol{s})=j!\sum_{\boldsymbol{\pi}\in P(j)} \frac{ \kappa_{\boldsymbol{\pi}}}{y_{\boldsymbol{\pi}}}  \p_{\boldsymbol{\pi}}=\mu^j \p_1^j+j!\sum_{\boldsymbol{\pi}\neq (1,1,\ldots,1)} \frac{ \kappa_{\boldsymbol{\pi}}}{y_{\boldsymbol{\pi}}}  \p_{\boldsymbol{\pi}}\label{EnergyAsymptoticEq}
\end{align} since $\kappa_1=\mu$ and $y_{\boldsymbol{\pi}}=m_1!=j!$ when $\boldsymbol{\pi}=(1,1,\ldots, 1)\in P(j).$ We have so far established that $\p_2=\big(p+o(1)\big)n^2$ and $\p_i=\big(p^i+o(1)\big)n^i$ for integers $i\geq 3$. Dividing both sides of \eqref{EnergyAsymptoticEq} by $(\mu n^{3/2})^j$ implies the claim since $\p_{\boldsymbol{\pi}}/(n^{3/2})^j=o(1)$ w.h.p.~ whenever $\boldsymbol{\pi}\neq (1,1,\ldots,1)$. 
\end{proof}


\section{Proof of Theorems \ref{MainTheorem1} and \ref{MainTheorem2}}\label{Section5}

 The following general form of the Hoeffding inequality \cite[Thm.~2.6.3]{Vershynin} plays an important role in our proof.  Suppose $\xi_1, \xi_2, \ldots, \xi_n$ are independent mean zero sub-gaussian random variables. There exists an absolute constant $c_1>0$ such that
\begin{align*}
\mathbb{P}\Bigg( \Big\vert \sum_{j=1}^n a_i\xi_i  \Big\vert \geq t  \Bigg)\leq 2\exp\Big( - \frac{c_1t^2}{K^2 \p_2(\boldsymbol{a})}   \Big)
\end{align*} for all $a_1, a_2, \ldots, a_n\in \mathbb{R}$, in which $\p_2(\boldsymbol{a})=\sum_i a_i^2$ and $K=\max_i \norm{\xi_i}_{\psi_2}.$ If in addition $\xi_1, \xi_2, \ldots, \xi_n$ have unit variances, then 
\begin{align}
 \norm{\sum_{k=1}^na_iX_i}_1\leq \sqrt{\p_2(\boldsymbol{a})}.\label{Vershynin1}
\end{align} Moreover, there exists an absolute constant $c_2>0$ such that for all $q\geq 2$,
\begin{align}
\norm{\sum_{k=1}^na_iX_i}_q\leq c_2K\sqrt{q} \sqrt{\p_2(\boldsymbol{a})}\label{Vershyninq}.
\end{align}


\subsection{Proof of Theorem \ref{MainTheorem1}}

 Suppose $X_1, X_2, \ldots, X_n\sim X$ are i.i.d.~random variables defined on a probability space $(\Omega, \mathcal{F}_{\Omega}, \mathbb{P}_{\Omega})$  in which  $X$ is a sub-gaussian random variable with mean $\mu$ and variance $\sigma^2$. Define the auxiliary variables $\xi_i=\sigma^{-2}(X_i-\mu)$ and let $j\geq 1$ be any integer. The sum $W_n(\boldsymbol{\lambda})$ satisfies 
\begin{align*}
\mathbb{E}_{\Omega} |W_n(\boldsymbol{\lambda})|^j&=\norm{W_n(\boldsymbol{\lambda})}_j^j \notag \\
&=n^{-j} \norm{\sum_{k=1}^n \lambda_k X_k}_j^j\notag \\
&=n^{-j}\norm{\sum_{k=1}^n \lambda_k( X_k-\mu)+\mu\sum_{k=1}^n\lambda_k}_j^j\notag\\
&=n^{-j}\sigma^{2j} \norm{\sum_{k=1}^n \lambda_k \xi_k}_j^j
\end{align*} since simple graphs are traceless. The random variables $\xi_1, \xi_2, \ldots \xi_n$ are mean zero sub-gaussian random variables with unit variances. Relations \eqref{Vershynin1} and \eqref{Vershyninq}, together with the inequality $2m\leq n(n-1)$, imply that there exists a constant $c>0$, which is independent of $n$, such that
\begin{align}
\mathbb{E}_{\Omega} |W_n(\boldsymbol{\lambda})|^j\leq cn^{-j}\big[ \p_2(\boldsymbol{\lambda})\big]^{j/2}= c n^{-j}(2m)^{j/2}\leq cn^{-j} [n(n-1)]^{j/2}\leq c\label{Uniform}
\end{align} Therefore, $\mathbb{E}_{\mathcal{G}}\mathbb{E}_{\Omega} |W_n(\boldsymbol{\lambda})|^j$ is finite and the Fubini-Tonelli theorem \cite[Thm.~2.16]{Folland} ensures
\begin{align}
\mathbb{E}\big[W_n(\boldsymbol{\lambda})\big]^j=\mathbb{E}_{\mathcal{G}} \mathbb{E}_{\Omega} \big[W_n(\boldsymbol{\lambda})\big]^j,\label{Fubini}
\end{align} in which $\mathbb{E}[\cdot]$ denotes expectation with respect to the product measure $\mathbb{P}_{\mathcal{G}}\otimes \mathbb{P}_{\Omega}$.

Proposition \ref{Proposition:AsymptoticEigen} and the uniform boundedness of the variables $\mathbb{E}_{\Omega} |W_n(\boldsymbol{\lambda})|^j$ allow us to compute the limit for the total expectation of $[W_n(\boldsymbol{\lambda})]^j$, which we now highlight. Define $E_j=\{ G\,:\,  n^{-j}f_j(\boldsymbol{\lambda})=\alpha_j\}$, where $\alpha_j=\alpha_j(n)$ is a real number to be chosen momentarily. Relation \eqref{Fubini} implies
\begin{align*}
\mathbb{E}[W_n(\boldsymbol{\lambda})]^j&=\mathbb{E}_{\mathcal{G}}[n^{-j} f_j(\boldsymbol{\lambda})]\\
&=\int_{\mathcal{G}} n^{-j} f_j(\boldsymbol{\lambda})\,d\mathbb{P}_{\mathcal{G}}\\
&=\alpha_j \mathbb{P}_{\mathcal{G}}( E_j)+\int_{E_j^c}n^{-j} f_j(\boldsymbol{\lambda})\,d\mathbb{P}_{\mathcal{G}}.
\end{align*}  The inequality $n^{-j}|f_j(\boldsymbol{\lambda})|\leq \mathbb{E}_{\Omega} |W_n(\boldsymbol{\lambda})|^j,$ together with \eqref{Uniform}, implies that
\begin{align*}
\Big\vert\int_{E_j^c}n^{-j} f_j(\boldsymbol{\lambda})\,d\mathbb{P}_{\mathcal{G}}\Big\vert\leq \int_{E_j^c}  \mathbb{E}_{\Omega} |W_n(\boldsymbol{\lambda})|^j\,d\mathbb{P}_{\mathcal{G}}\leq c \mathbb{P}_{\mathcal{G}}(E_j^c).
\end{align*} Therefore,
\begin{align*}
\lim_{n\to \infty} \mathbb{E}[W_n(\boldsymbol{\lambda})]^j=\lim_{n\to \infty}\Big( \alpha_j \mathbb{P}_{\mathcal{G}}( E_j)\Big)
\end{align*} provided $\mathbb{P}_{\mathcal{G}}(E_j)\to 1$ as $n\to \infty$ so that $\mathbb{P}_{\mathcal{G}}(E_j^c)\to 0$ as $n\to \infty$.  We now choose $\alpha_j$ according to Proposition \ref{Proposition:AsymptoticEigen} to conclude
\begin{align}
\lim_{n\to \infty} \mathbb{E}\big[W_n(\boldsymbol{\lambda})\big]^j=\begin{cases} 0 & \mbox{for } j\mbox{ odd},\\
c_j(p) & \mbox{for } j \mbox{ even}. \end{cases}\label{MomentMethod1}
\end{align}

A useful criteria for determing when a distribution is determined by its moments is that it admits a moment generating function \cite[Thm.~30.1]{Billingsley}. Suppose that the distribution of a random variable $W$ is determined by its moments. The method of moments \cite[Thm.~30.2]{Billingsley} ensures that $W_n(\boldsymbol{\lambda})$ converges in distribution to $W$ provided $W_n(\boldsymbol{\lambda})$ has moments of all orders and for all $j\geq 1$, 
\begin{align*}
\lim_{n\to \infty} \mathbb{E}\big[W_n(\boldsymbol{\lambda})\big]^j=\mathbb{E}[W^j].
\end{align*} Recall identity \eqref{BellPartition} to conclude, for $j\geq 1$, 
\begin{align}
(2j)!\sum_{\boldsymbol{\pi}\in E(2j)} \frac{\kappa_{\boldsymbol{\pi}}}{y_{\boldsymbol{\pi}}} p^{2j-m_2}&=(2j)!p^{2j}\sum_{\boldsymbol{\pi}\in E(2j)} \frac{1}{y_{\boldsymbol{\pi}}}\kappa_2^{m_2}\kappa_4^{m_4}\cdots \kappa_{2k}^{m_{2k}}p^{-m_2}\\
&=(2j)!p^{2j}\sum_{\boldsymbol{\pi}\in E(2j)}\frac{1}{y_{\boldsymbol{\pi}}}\kappa^{m_2}\kappa_4^{m_4}\cdots \kappa_{2k}^{m_{2k}}\\
&=p^{2j}B_{2j}(0, \kappa, 0,\kappa_4, 0,\ldots, 0,\kappa_{2j}),\label{MomentMethod2}
\end{align} where $\kappa=\kappa_2/p$.  The generating function \eqref{BellGen} for the complete Bell polynomials implies
\begin{align*}
\exp\Big(\sum_{\ell=1}^{\infty} x_{\ell} \frac{t^{\ell}}{\ell !}\Big)+\exp\Big(\sum_{\ell=1}^{\infty} x_{\ell} \frac{(-t)^{\ell}}{\ell!}\Big)=2\sum_{j=0}^{\infty} B_{2j}(x_1, x_2, \ldots, x_{2j})\frac{ t^{2j}}{(2j)!}
\end{align*} Setting $x_1=x_2=\cdots=x_{2j-1}=0$ in the above expression and then simplifying yields the identity
\begin{align*}
\sum_{j=0}^{\infty} B_{2j}(0, x_2,0, \ldots,0, x_{2j})\frac{ (pt)^{2j}}{(2j)!}=\exp\Big(\sum_{\ell=1}^{\infty} x_{2\ell} \frac{(pt)^{2\ell}}{(2\ell)!}\Big).
\end{align*} Setting $x_2=\kappa$ and $x_i=\kappa_i$ for $i>2$ in the above expression and then appealing to \eqref{MomentMethod2} implies
\begin{align}
\sum_{j=0}^{\infty}c_{2j}(p) \frac{t^{2j}}{(2j)!}&=\sum_{j=0}^{\infty} B_{2j}(0,\kappa,0, \ldots,0, \kappa_{2j})\frac{ (pt)^{2j}}{(2j)!} \notag\\
&=\exp\Big(\frac{\kappa p^2t^2}{2}+\sum_{\ell=2}^{\infty} \kappa_{2\ell} \frac{(pt)^{2\ell}}{(2\ell)!}\Big)\notag\\
&=\exp\Big(\frac{\kappa p^2 t^2}{2}-\frac{\kappa_2p^2 t^2}{2}+\sum_{\ell=1}^{\infty} \kappa_{2\ell} \frac{(pt)^{2\ell}}{(2\ell)!}\Big)\notag\\
&=\exp\Big(\frac{\alpha p^2t^2}{2}-\frac{\kappa_2p^2 t^2}{2}\Big) \exp\Bigg(\frac{ K(pt)+K(-pt)  }{2}   \Bigg)\notag\\
&=\exp\Big(\frac{1}{2}p(1-p)\sigma^2 t^2\Big)\sqrt{M(pt)M(-pt)}\label{MomentMethod3}
\end{align} in a neighborhood of $t=0$, where $M(t)$ and $K(t)$ denote the moment and cumulant generating functions of $X$, respectively.  If $X$ is a normal random variable with mean $\mu$ and variance $\sigma^2$, then the moment generating function of $X$ is given by $M(t)=\exp\big( \mu t+ \frac{1}{2}\sigma^2t^2\big)$ and \eqref{MomentMethod3} implies that
\begin{align*}
\sum_{j=0}^{\infty}c_{2j}(p) \frac{t^{2j}}{(2j)!}=\exp\Bigg(\frac{1}{2}p\sigma^2 t^2\Bigg)
\end{align*} in a neighborhood of $t=0$. The method of moments and \eqref{MomentMethod1} imply Theorem \ref{MainTheorem1} since the moment generating function for the sum of two independent random variables is given by the product of their moment generating functions.


\subsection{Proof of Theorem \ref{MainTheorem2}}

If $X$ is a symmetric sub-gaussian random variable, then $M(t)=M(-t)$ and \eqref{MomentMethod3} implies 
\begin{align*}
\sum_{j=0}^{\infty}c_{2j}(p) \frac{t^{2j}}{(2j)!}=\exp\Bigg(\frac{1}{2}p(1-p)\sigma^2 t^2\Bigg)M(pt).
\end{align*} The moment generating function of $pX$ converges for all $t\in \mathbb{R}$ since $pX$ is a sub-gaussian random variable with mean zero \cite[Prop.~2.5.2]{Vershynin}.  The right hand side is the moment generating function for the sum of $pX$ with an independent normal with mean zero and variance $p(1-p)\sigma^2$. The corresponding distribution is therefore determined by its moments. The method of moments and \eqref{MomentMethod1} now conclude the proof of Theorem \ref{MainTheorem2}.


\section{Proof of Theorem \ref{MainTheorem3}}\label{Section6}

Suppose $X_1, X_2, \ldots, X_n\sim X$ are i.i.d.~random variables defined on $(\Omega, \mathcal{F}, \mathbb{P}_{\Omega})$ in which $X$ is a sub-gaussian random variable with mean zero and variance $\sigma^2$.   The same reasoning of Section \ref{Section5} implies that for all $j\geq 1$, 
\begin{align*}
\lim_{n\to \infty} \mathbb{E}[W_n(\boldsymbol{s})]^j=j!\sum_{\boldsymbol{\pi}\in P_0(j)} \frac{ \kappa_{\boldsymbol{\pi}}}{y_{\boldsymbol{\pi}}}  p^{j-m_2}.
\end{align*} Set $\kappa=\kappa_2/p$ and apply \eqref{BellPartition} to conclude, for $j\geq 1$,
\begin{align*}
j!\sum_{\boldsymbol{\pi}\in P_0(j)} \frac{ \kappa_{\boldsymbol{\pi}}}{y_{\boldsymbol{\pi}}}  p^{j-m_2}&=j!p^j\sum_{\boldsymbol{\pi}\in P_0(j)} \frac{1}{y_{\boldsymbol{\pi}}} \kappa_2^{m_2}\kappa_3^{m_3}\cdots \kappa_j^{m_j} p^{-m_2}\\
&=j!p^j\sum_{\boldsymbol{\pi}\in P_0(j)} \frac{1}{y_{\boldsymbol{\pi}}} \kappa^{m_2}\kappa_3^{m_3}\cdots \kappa_j^{m_j}\\
&=p^j B_j(0, \kappa, \kappa_3, \ldots, \kappa_j).
\end{align*} The generating function \eqref{BellGen} for the complete Bell polynomials implies
\begin{align*}
\sum_{j=0}^{\infty} B_j(0, \kappa, \kappa_3, \ldots, \kappa_j) \frac{(pt)^j}{j!}&=\exp\Big( \frac{1}{2}\kappa p^2t^2+\sum_{\ell=3}^{\infty} \kappa_{\ell} \frac{(pt)^{\ell}}{\ell !}\Big)\\
&=\exp\Big( \frac{1}{2}\kappa p^2t^2-\frac{1}{2}\kappa_2 p^2t^2+\sum_{\ell=2}^{\infty} \kappa_{\ell} \frac{(pt)^{\ell}}{\ell !}\Big)\\
&=\exp\Big( \frac{1}{2}\kappa p^2t^2-\frac{1}{2}\kappa_2 p^2t^2\Big)\exp\Big(K(pt)\Big)\\
&=\exp\Big(\frac{1}{2}p(1-p)\sigma^2 t^2\Big)M(pt)
\end{align*} in a neighborhood of $t=0$, where $M(t)$ and $K(t)$ denote the moment and cumulant generating functions of $X$, respectively. The method of moments concludes the proof of Theorem \ref{MainTheorem3}.


\section{Proof of Theorem \ref{MainTheorem4}}\label{Section7}

Suppose $X$ is a random variable defined on  $(\Omega, \mathcal{F}, \mathbb{P}_{\Omega})$ which has a non-zero mean and admits a moment generating function. Let $X_1, X_2, \ldots, X_n\sim X$ be i.i.d.~ random variables. Denote $V_n(\boldsymbol{s})=W_n(\boldsymbol{s})/(\mu \sqrt{n})$ for brevity. Minkowski's inequality \cite[p.~242]{Billingsley} and the fact that $X_1, X_2, \ldots, X_n\sim X$ are identically distributed imply
\begin{align*}
\Big(\mathbb{E}_{\Omega}|V_n(\boldsymbol{s})|^j\Big)^{1/j}&=\mu^{-1}n^{-3/2}\Big(\mathbb{E}_{\Omega}|\sigma_1X_1+\sigma_2X_2+\cdots +\sigma_nX_n|^j\Big)^{1/j}\\
&=\mu^{-1}n^{-3/2} \norm{ \sigma_1X_1+\sigma_2X_2+\cdots +\sigma_nX_n}_j\\
&\leq \mu^{-1}n^{-3/2}\norm{X}_j \sum_{k=1}^n \sigma_k.
\end{align*} The Cauchy-Schwarz inequality yields the inequality
\begin{align*}
\Big(\sum_{k=1}^n \sigma_k\Big)^2=\Big(\sum_{k=1}^n 1\cdot \sigma_k\Big)^2\leq n \sum_{k=1}^n \sigma_k^2=n\p_2(\boldsymbol{s}).
\end{align*} Appealing to the inequality $\p_2(\boldsymbol{s})=\p_2(\boldsymbol{\lambda})=2m\leq n(n-1)$ now implies that there exists a constant $c$, which is independent of $n$, for which 
\begin{align}
\mathbb{E}_{\Omega}|V_n(\boldsymbol{s})|^j\leq \big(\mu^{-1}n^{-3/2}\big)^j\norm{X}_j^j\big(\sqrt{2nm}\big)^j\leq c.\label{UniformEnergy}
\end{align} Therefore, $\mathbb{E}_{\mathcal{G}}\mathbb{E}_{\Omega} |V_n(\boldsymbol{\lambda})|^j$ is finite and the Fubini-Tonelli theorem  ensures
\begin{align}
\mathbb{E}\big[V_n(\boldsymbol{\lambda})\big]^j=\mathbb{E}_{\mathcal{G}} \mathbb{E}_{\Omega} \big[V_n(\boldsymbol{\lambda})\big]^j,\label{FubiniEnergy}
\end{align} in which $\mathbb{E}[\cdot]$ denotes expectation with respect to the product measure $\mathbb{P}_{\mathcal{G}}\otimes \mathbb{P}_{\Omega}$.

Proposition \ref{Proposition:AsymptoticEnergy} and the uniform boundedness of the variables $\mathbb{E}_{\Omega} |V_n(\boldsymbol{s})|^j$ now allow us to compute the limit for the total expectation of $[V_n(\boldsymbol{s})]^j$, which we now highlight. Define $E_j=\{ G\,:\,  \big(\mu^{-1}n^{-3/2}\big)^jf_j(\boldsymbol{s})=\alpha_j\}$, where $\alpha_j=\alpha_j(n)$ is a real number to be chosen momentarily. Relation \eqref{FubiniEnergy} implies
\begin{align*}
\mathbb{E}[V_n(\boldsymbol{s})]^j&=\mathbb{E}_{\mathcal{G}}\big[\big(\mu^{-1}n^{-3/2}\big)^j f_j(\boldsymbol{s})\big]\\
&=\int_{\mathcal{G}}    \big(\mu^{-1}n^{-3/2}\big)^j f_j(\boldsymbol{s})    \,d\mathbb{P}_{\mathcal{G}}\\
&=\alpha_j \mathbb{P}_{\mathcal{G}}( E_j)+\int_{E_j^c} \big(\mu^{-1}n^{-3/2}\big)^j f_j(\boldsymbol{s})\,d\mathbb{P}_{\mathcal{G}}.
\end{align*}  The inequality $\big(\mu^{-1}n^{-3/2}\big)^j| f_j(\boldsymbol{s})|\leq \mathbb{E}_{\Omega} |V_n(\boldsymbol{s})|^j,$ together with \eqref{UniformEnergy}, implies that
\begin{align*}
\Big\vert\int_{E_j^c}\big(\mu^{-1}n^{-3/2}\big)^j f_j(\boldsymbol{s})\,d\mathbb{P}_{\mathcal{G}}\Big\vert\leq \int_{E_j^c}  \mathbb{E}_{\Omega} |V_n(\boldsymbol{s})|^j\,d\mathbb{P}_{\mathcal{G}}\leq c \mathbb{P}_{\mathcal{G}}(E_j^c).
\end{align*} Therefore,
\begin{align*}
\lim_{n\to \infty} \mathbb{E}[V_n(\boldsymbol{s})]^j=\lim_{n\to \infty}\Big( \alpha_j \mathbb{P}_{\mathcal{G}}( E_j)\Big)
\end{align*} provided $\mathbb{P}_{\mathcal{G}}(E_j)\to 1$ as $n\to \infty$ so that $\mathbb{P}_{\mathcal{G}}(E_j^c)\to 0$ as $n\to \infty$.  We now choose $\alpha_j$ according to Proposition \ref{Proposition:AsymptoticEnergy} to conclude
\begin{align}
\lim_{n\to \infty} \mathbb{E}\big[V_n(\boldsymbol{s})\big]^j=\Big( \frac{8\sigma_p}{3\pi}\Big)^j.\label{MomentMethodEnergy1}
\end{align} Finally, the series 
\begin{align*}
\sum_{j=0}^{\infty} \Big( \frac{8\sigma_p}{3\pi}\Big)^j\frac{t^j}{j!}=\exp\Big( \frac{8\sigma_p}{3\pi}t  \Big)
\end{align*} corresponds to the moment generating function of a point mass at $8\sigma_p/(3\pi)$. The method of moments concludes the proof of Theorem \ref{MainTheorem4}.


\section{Closing Remarks and Open Questions}\label{Section8}

There are several possible paths to take with this project. However, none of these paths appear to be particularly easy. We argue, without the intention of undermining its potentital for difficulty, that a natural step forward is to  replace the weights appearing in \eqref{EigenSequence} with the eigenvalues for the Laplacian and signless Laplacian of a random graph \cite[Ch.~7]{Cvetkovic}. We recall that the \emph{Laplacian} of a simple graph $G$ of order $n$ is the symmetric $n\times n$ matrix $L(G)$ defined by
\begin{align*}
L(G)=D(G)-A(G),
\end{align*} where $D(G)$ denotes the \emph{degree matrix of $G$} defined by setting $[D(G)]_{ii}$ equal to the degree of vertex $i$ and $[D(G)]_{ij}=0$ when $i\neq j$. The \emph{signless Laplacian} of a simple graph $G$ of order $n$ is the symmetric $n\times n$ matrix $Q(G)$ defined by
\begin{align*}
Q(G)=D(G)+A(G).
\end{align*} If $G$ is a random graph, then the upper-triangular entries of $L(G)$ and $Q(G)$ satisfy the hypotheses of the theorems in Section \ref{Section3}. Unfortunately, the vertex degrees of a random graph, while not highly correlated, are not independent. The theorems of Section \ref{Section3}, therefore, do not apply. A result due to Bryc, Dembo and Jiang on spectral measures of large Markov matrices  \cite[Thm.~1.3]{Bryc}, however, can likely be adapted to handle this new situation. Lastly, we remark that the Laplacian and signless Laplacian matrices are positive semi-definite and therefore have non-negative eigenvalues. This motivates the following.

\begin{problem}
Find analogs of Theorems \ref{MainTheorem3} and \ref{MainTheorem4} for the Laplacian and signless Laplacian spectra of  Erd\H{o}s-R\'enyi-Gilbert graphs. 
\end{problem}

Perhaps the most natural question to consider pertains to the assumptions imposed on $X_1, X_2, \ldots, X_n$. Ultimately, the sub-gaussian assumption in Theorems \ref{MainTheorem2} and \ref{MainTheorem3} is needed to ensure the partial moments of $W_n(\boldsymbol{a})$ are uniformly bounded over all graphs. This uniform boundedness is crucial for evaluating the limit of $\mathbb{E}[W_n(\boldsymbol{a})]^j$. The authors leave it as an interesting task to try and drop the sub-gaussian assumption in these theorems.

\begin{problem}
Can we relax the sub-gaussian condition imposed on the random variables $X_1, X_2, \ldots, X_n$ appearing in Theorems \ref{MainTheorem2} and \ref{MainTheorem3}?
\end{problem}

\medskip\noindent\textbf{Acknowledgments.} The authors thank Stephan Ramon Garcia and Ken McLaughlin for useful comments and suggestions in the preparation of this manuscript.


\end{document}